\newtheorem{theorem}{Theorem}[section]
\newtheorem{proposition}[theorem]{Proposition}
\newtheorem{lemma}[theorem]{Lemma}
\theoremstyle{definition}    
\newtheorem{definition}[theorem]{Definition}
\theoremstyle{remark}
\newtheorem{remark}[theorem]{Remark}
\newtheorem{example}[theorem]{Example}
\renewcommand{\AA}{\mathbb{A}}
\newcommand{\W}{\mathcal{W}}
\newcommand{\V}{\mathcal{V}}
\renewcommand{\L}{\mathcal{L}}
\newcommand{\ca}{\mathcal}
\newcommand{\N}{\mathbb{N}}
\newcommand{\R}{\mathbb{R}}
\newcommand{\Z}{\mathbb{Z}}
\newcommand\pt{\on{pt}}
\renewcommand{\P}{\ca{P}}
\newcommand\lie[1]{\mathfrak{#1}}
\renewcommand{\k}{\lie{k}}
\newcommand{\h}{\lie{h}}
\newcommand{\g}{\lie{g}}
\newcommand{\on}{\operatorname}
\newcommand{\Hom}{ \on{Hom}}
\renewcommand{\ker}{ \on{ker}}
\newcommand\qu{/\kern-.7ex/} 
\newcommand{\lra}{\longrightarrow}
\newcommand{\hra}{\hookrightarrow}
\newcommand{\f}{\frac}
\renewcommand{\l}{\langle}
\renewcommand{\r}{\rangle}
\newcommand{\ti}{\tilde}
\newcommand{\eeq}{\end{eqnarray*}}
\newcommand{\beq}{\begin{eqnarray*}}
\newcommand{\wh}{\widehat}
\newcommand{\wt}{\widetilde}
\newcommand{\mf}{\mathfrak}
\newcommand{\lin}{{\on{lin}}}
\renewcommand{\subset}{\subseteq}
\newcommand{\pol}{{\on{pol}}}
\newcommand{\alg}{{\on{alg}}}
\newcommand{\iz}{\mathsf{i}}
\begin{document}
\sloppy
\title[Quotients of double vector bundles and multigraded bundles]{Quotients of  double vector bundles\\ and multigraded bundles}
\author{Eckhard Meinrenken}
\maketitle

\begin{abstract}
We study quotients of multi-graded bundles, including double vector bundles. Among other things, we show that any such quotient fits into a tower of affine bundles. Applications of the theory include a construction of 
normal bundles for weighted submanifolds, as well as for pairs of submanifolds with clean intersection.  
\end{abstract}

\maketitle
\begin{quote}
	{\it \small To the memory of Kirill Mackenzie.}
\end{quote}
\vskip1cm
\bigskip

\section{Introduction}
Double vector bundles are fiber bundles $D\to M$, described by commutative squares
\begin{equation}\label{eq:dvb}
\xymatrix{ {D} \ar[r] \ar[d] & B \ar[d]\\
	A \ar[r] & M}
\end{equation}
whose sides $A,B$ are vector bundles,  with a certain compatibility between the horizontal and vertical vector bundle structures. 
These objects were introduced by Pradines \cite{pra:rep,pra:fib} in the 1970s in terms of local fiber bundle charts.
Mackenzie \cite{mac:gen} described the compatibilities more intrinsically, as the requirements that the structure 
maps for the horizontal vector bundle should be vector bundle morphisms for the vertical vector bundle structure, and vice versa.  
Later, Grabowski and Rotkiewicz \cite{gra:hig} simplified these conditions to the single requirement that the horizontal and vertical scalar multiplications commute.  

Double structures, and more generally $n$-fold structures for $n>1$, feature prominently in 
Mackenzie's work  \cite{mac:dou,mac:gen}.   Among other things, he discovered a natural action of the symmetric group $S_3$ on the category of double vector bundles through duality operations.  Together with Xu \cite{mac:lie}, he established a canonical isomorphism between $T^*V$ and $T^*V^*$, for any vector bundle $V\to M$. Mackenzie also studied Lie algebroid and Poisson structures on double vector bundles, and unearthed a wealth of interesting  relationships among these structures. In later work, he and his co-authors explored the more difficult theory of triple vector bundles \cite{mac:warp,gra:dua,mac:dua2}, and even $n$-fold vector bundles \cite{gra:dua1}. 

In this note, we will consider \emph{quotients} of double vector bundles $D\to M$ by double subvector bundles $D'\to M$.  
By definition, such a quotient is given by a surjective morphism of double vector bundles $D\to D''$ having $D'$ as its \emph{kernel}
(the elements mapping to $M\subset D''$).  Equivalently, it is given by an exact sequence of double vector bundles over $M$, 
 \begin{equation}\label{eq:d} 0_M\to D'\to D\to D''\to 0_M\end{equation}
where $0_M$ indicates $M$ regarded as a double vector bundle over $M$. In contrast to  ordinary vector bundles, knowledge of the kernel $D'$ does not suffice to construct the quotient $D''$. 
 The additional information needed may be described in terms of a bundle of bigraded Lie algebras $\g\to M$, with summands in bidegrees $(-1,0),(0,-1),(-1,-1)$ given by 
 \begin{equation}\label{eq:gradedlie} \g=\wh{A}\oplus \wh{B}\oplus C.\end{equation}
Here $C\to M$ is the  \emph{core} of the double vector bundle, given as the set of elements mapping to $M$ under both side projections, while  
$\wh{A},\wh{B}\to M$ are the \emph{fat bundles} of Gracia-Saz and Mehta \cite{gra:vba}, given as certain extensions of the side bundles $A,B\to M$ by $B^*\otimes C$ and $A^*\otimes C$, respectively.
The fiberwise bracket on \eqref{eq:gradedlie} is defined by the bilinear pairing, due to Flari and Mackenzie \cite{mac:warp} (see also \cite{me:wei}) 
\[ \l\cdot,\cdot\r_C\colon \wh{A}\times_M \wh{B}\to C. \] 

For degree reasons, $\g$ is a bundle of \emph{nilpotent} Lie algebras; let $G=\exp(\g)$ be the corresponding bundle of simply connected nilpotent Lie groups. 
We will show that there is a canonical action 
\[ G\times_M D\to D\]
of this bundle of Lie groups on the double vector bundle, which in fact is \emph{fiberwise transitive}. We shall prove (cf.~ Theorem \ref{th:quot2}):\medskip

\noindent{\bf Theorem.} The  quotients of $D$ are in bijective correspondence with bigraded Lie subalgebra bundles 
\[ \h=\wh{A}'\oplus \wh{B}'\oplus C'\]
of $\g$, with the property $\wh{A}'\cap (B^*\otimes C)=B^*\otimes C'$, and similarly for $\wh{B}'$. 
 Letting $H\subset G$ be the closed subgroup bundle exponentiating $\h$, we  have that 
$ D'=H\cdot M$ and $D''=D/H$. 
\medskip

We will show that the quotient $D''$ may also be obtained as an \emph{iterated quotient} 
\[ D''=(D/C')/(A'\oplus B')\]
by vector bundle actions: One first takes a quotient by the core action of $C'\subset D'$, followed by an action of the direct sum of side bundles $A'\oplus B'$ of $D'$. However, one still needs the subalgebra $\h$ in order to specify the latter action.

For a concrete context where this construction applies, consider a manifold $M$ together with two submanifolds $N_1,N_2\subset M$, with clean intersection $N=N_1\cap N_2$. In \cite{me:wei}, we defined a 
\emph{double normal bundle} 
\begin{equation}\label{eq:dnb}
\xymatrix{ {\nu(M,N_1,N_2)} \ar[r] \ar[d] & \nu(N_1,N) \ar[d]\\
	\nu(N_2,N)\ar[r] & N}
\end{equation}
whose side bundles $\nu(N_i,N),\ i=1,2$ are the usual normal bundles of $N\subset N_i$. A more detailed discussion may be found  in the thesis \cite{pik:the}. We will see that the double normal bundle is a subquotient of the double tangent bundle $TTM$, in direct generalization of the usual 
 definition  of a normal bundle of a submanifold  as 
$\nu(M,N)=TM|_N/TN$.  Here $D\subset TTM$ is the pre-image of $TN_1|_N\times_N TN_2|_N$ under the side bundle projection to $TM\times_M TM$. 
The subalgebra bundle $\h$ defining the quotient $D''=\nu(M,N_1,N_2)$
may be described explicitly in terms of tangent/vertical 
lifts of vector fields.

\bigskip

There is a similar discussion for the so-called \emph{graded bundles} of Grabowski and Rotkievicz \cite{gra:hig}.  A graded bundle $E\to M$ is a fiber bundle, with typical fiber a negatively graded vector space $\mathsf{E}=\bigoplus_{i=1}^r \mathsf{E}^{-i}$, and with structure group the polynomial diffeomorphisms of $\mathsf{E}$ (not necessarily linear) preserving the grading. Thus, graded bundles may be seen as special cases of non-negatively graded supermanifolds where no odd coordinates are present. Examples of graded bundles include graded \emph{vector} bundles, but also the $r$-th tangent bundle $T_rM=J_0^r(\R,M)$. 
Grabowski and Rotkiewicz \cite{gra:gra} proved that graded bundles may be defined, much more simply, as manifolds $E$ together with a  smooth action of the multiplicative monoid $(\R,\cdot)$. Given a graded subbundle $E'\subset E$ over the same base $M$, one may again ask what additional data are needed in order to define a quotient graded bundle $E''$ with an exact 
sequence of graded bundles 
\[ 0_M\to E'\to E\to E''\to 0_M.\]
Similar to the case of double vector bundles, the answer involves graded Lie subalgebra bundles of a nilpotent graded Lie algebra bundle canonically associated to $E$. Quotient constructions arise, for example, 
in the theory of weightings along submanifolds $N\subset M$; the so-called \emph{weighted normal bundle}
\[ \nu_\W(M,N)\to N\] 
obtained in \cite{loi:wei} is a graded bundle given as a subquotient of the $r$-th tangent bundle $T_rM$.  \medskip

Rather than treating  double vector bundles and graded bundles separately, we will  consider both as special cases of \emph{multi-graded bundles}: That is, manifolds $E$ with $n$ commuting actions of the monoid $(\R,\cdot)$. The two special cases are most relevant in applications, but the general setting is useful as well -- for example, one might 
consider notions of normal bundles for cleanly intersecting submanifolds with weightings \cite{loi:wei}, or $n$-fold clean intersections of submanifolds with $n>2$. We will therefore start our discussion with a review of multi-graded bundles. \medskip


This article is dedicated to the memory of Kirill Mackenzie, who unexpectedly passed away this summer. Over the  years, I benefitted greatly from communications with Kirill, and came to appreciate his deep knowledge and generosity. \medskip

\noindent{\bf Acknowledgements.} I would like to thank Yiannis Loizides and Jeffrey Pike for discussions related to this work, and the referees for a careful reading and valuable comments.

\section{$n$-fold vector bundles and multigraded bundles}
We begin with  background material on multigraded bundles, as needed in this work.  Double vector bundles and graded bundles will be regarded as special cases.

\subsection{$n$-fold vector bundles}
We shall adopt the following approach to  n-fold vector bundles due to Grabowski and Rotkiewicz \cite{gra:hig,gra:gra}. 
Let $(\R,\cdot)$ be the multiplicative monoid of real numbers. Given a vector bundle $E\to M$, the fiberwise scalar multiplication defines a smooth action of this monoid, 
\[ \kappa\colon \R\times E\to E,\ (t,v)\mapsto \kappa_t(v).\]
This action has the regularity property that the map $E\to TE,\ v\mapsto \f{d}{d t}|_{t=0}\kappa_t(v)$ is injective. By a result of 
Nagano \cite{nag:one}, further developed in \cite{gra:hig}, 
any monoid action of $(\R,\cdot)$ on a smooth manifold $E$ with 
this regularity condition determines a unique vector bundle structure on $E$, with $\kappa_t$ as its scalar multiplication. In fact, the vector bundle structure on $E$ is such that the inclusion $E\to TE$ is a morphism of vector bundles. 
This implies, for example, that  subvector bundles of $E$ are exactly the $(\R,\cdot)$-invariant submanifolds, and that smooth maps between vector bundles are vector bundle morphisms if and only if they are $(\R,\cdot)$-equivariant. 

Using the perspective, it was shown in \cite{gra:hig} that double vector bundles are simply manifolds $D$ with two commuting regular $(\R,\cdot)$-actions, a horizontal action $\kappa^h$ and a vertical action $\kappa^v$. 
The side bundles and the base of the double vector bundle are given by 
\[ A=\kappa_0^v(D),\ B=\kappa_0^h(D),\  M=\kappa_0^h\kappa_0^v(D).\] 
More generally, $n$-fold vector bundles are characterized as smooth manifolds $E$
with actions $\kappa\colon \R^n\times E\to E$ of the multiplicative monoid 
\begin{equation}\label{eq:n} (\R^n,\cdot)=(\R,\cdot)\times \cdots \times (\R,\cdot)\end{equation}
such that each component action is regular. 
Typical examples of $n$-fold vector bundles include the $n$-fold tangent bundle $T^nM=T\cdots TM$, or iterations of tangent and cotangent bundles.  As another example, consider $n$ submanifolds $N_1,\ldots,N_n$ of a manifold $M$ with \emph{clean 
intersection}: that is, for all $q\in M$, there are local coordinates  centered at $q$, in such a way that any $N_i$ containing $q$ is given by the vanishing of a subset of these coordinates.
Then  there is an \emph{$n$-fold normal bundle} 
\begin{equation}\label{eq:multinormal} \nu(M,N_1,\ldots,N_n)\to N\end{equation}
with base $N=\bigcap_{i=1}^n N_i$. For $n=2$, this was introduced in \cite{me:wei,pik:the}; see \cite{loi:wei} for general $n$. One of the possible definitions is as a character spectrum: 
Give $C^\infty(M)$ the multi-filtration, where a function has filtration degree $\mathbf{i}=(i_1,\ldots,i_n)$ if it vanishes to order $i_1$ on $N_1$, to order $i_2$ on $N_2$, and so on. Let $\on{gr}C^\infty(M)$ be the associated multi-graded algebra; its summand of degree $\mathbf{i}$ is the space of functions vanishing to order $\mathbf{i}$, modulo sums of functions vanishing to higher order (using the partial ordering on multi-indices). Then 
\eqref{eq:multinormal} may be defined as $\Hom_\alg(\on{gr}C^\infty(M),\R)$. Using evaluation,  
$\on{gr}C^\infty(M)$ is then realized as the algebra of polynomial functions on the $n$-fold normal bundle. We will discuss double normal bundles in Section \ref{sec:dnb} below.

\subsection{Multi-graded bundles}
Grabowski and Rotkiewicz found that it is interesting to drop the regularity assumption: 
\begin{definition}
An \emph{$n$-graded bundle} is a manifold $E$  with a smooth action $\kappa\colon \R^n\times E\to E$ of the multiplicative monoid $(\R^n,\cdot)$, called its \emph{scalar multiplications}. For $n=1$ it is called a \emph{graded bundle}. A \emph{morphism of $n$-graded bundles} $\phi\colon E'\to E$ is a smooth map intertwining the scalar multiplications. 
\end{definition}
By \cite{gra:gra}, an $n$-graded bundle $E$ is a smooth fiber bundle over the fixed point set $M\subset E$ of the action,  with $\kappa_0$ as the bundle projection. 

\begin{example}\label{ex:grvector}
A negatively graded \emph{ vector} bundle $E=E^{-1}\oplus \cdots \oplus E^{-r} \to M$ is a graded bundle, with  $\kappa_t$ given as multiplication by $t^i$ on $E^{-i}$.
We may also regard $E$ as a 2-graded bundle, where the second $(\R,\cdot)$-action is the usual scalar multiplication. 
More generally, $(\Z_{<0})^n$-graded  vector bundles 
\begin{equation}\label{eq:multigradedvector} E=\bigoplus_{i\in \N^n} E^{-i}\end{equation}
are $n$-graded bundles, and may be regarded as $(n+1)$-graded bundles by including the scalar multiplication for the usual vector bundle structure over $M$.  

Conversely, an $n$-graded bundle $E\to M$, together with a vector bundle structure over the same base $M$, is an $n$-graded vector bundle 
if and only if the $(\R^n,\cdot)$-action is linear, i.e., 
commutes with the scalar multiplication from the vector bundle structure. 
\end{example}

\begin{example}\label{ex:rthtangent}
The $r$-th tangent bundle (Ehresmann's bundle of $r$-velocities \cite{ehr:pro})) may be defined as a jet bundle 
\begin{equation}\label{eq:jet}T_rM=J^r_0(\R,M),\end{equation} consisting of $r$-jets of paths $\gamma\colon \R\to M$. It is a graded bundle, with $(\R,\cdot)$-action defined by linear reparametrization of paths.  
Its iterations 
\[ T_{r_1\cdots r_n}M=T_{r_1}\cdots T_{r_n}M\] are examples of $n$-graded bundles. These also admit a more symmetric description: For $ r\in \N$ let $\AA_r$ be the unital graded algebra with a generator $\epsilon$ of degree $-1$, and relation $\epsilon^{r+1}=0$. Then $\AA_{r_1}\otimes \cdots\otimes \AA_{r_n}$ is an $n$-graded algebra, and 
\begin{equation}\label{eq:iterated} T_{r_1\cdots r_n}M=\on{Hom}_{\on{alg}}(C^\infty(M),\AA_{r_1}\otimes \cdots \otimes \AA_{r_n}).\end{equation}
(This is analogous to the characterization of tangent vectors as derivations.) Note that the various `flip isomorphisms' 
(such as $T_{r_1 r_2}M\cong T_{r_2r_1}M$) are obvious from this description. Let us also remark that there is a natural inclusion 
$T_rM\to T_{1,\ldots,1}M=T^rM$, coming from the inclusion $\AA_r\to \AA_1\otimes \cdots \otimes \AA_1$ as the fixed point set of the action of the symmetric group $S_r$ on $\AA_1\otimes \cdots \otimes \AA_1$.  
See \cite[Chapter VIII]{kol:nat}  for more information on the Weil algebra approach to higher tangent bundles.  
\end{example}

\begin{example}\label{ex:lie}
	Let $\g=\bigoplus_{i\in \N^n}\g^{-i}$ be a finite-dimensional $n$-graded Lie algebra, i.e., $[\g^{-i_1},\g^{-i_2}]\subseteq \g^{-i_1-i_2}$. 
	For degree reasons, $\g$ is necessarily nilpotent; let $G=\exp\g$ be the simply connected nilpotent Lie group exponentiating it. 
    Since the  $(\R^n,\cdot)$-action on $\g$ is by Lie algebra morphisms, it exponentiates to an action on $G$ by 
    Lie group morphisms, making $G$ into an $n$-graded bundle over $\pt$. If $\h\subset \g$ is an $n$-graded Lie subalgebra, with corresponding subgroup $H=\exp\h$, then the 
    homogeneous space $G/H$ is an $n$-graded bundle over $\pt$.     Lie groups $G$ with an $(\R,\cdot)$-action by group morphisms  go under various names, for example  \emph{homogeneous Lie groups} (for a detailed discussion, see \cite{fis:qua}).   This example generalizes to Lie algebroids, and in particular to bundles of $n$-graded Lie algebras. 
\end{example}

\subsection{Exact sequences}
Let $E,E'$ be $n$-graded bundles over $M$. For a morphism of $n$-graded bundles $\varphi\colon E'\to E$, with base map the identity, 
we define the range and kernel by 
\[ \on{ran}(\varphi)=\varphi(E'),\ \ \ \ker(\varphi)=\varphi^{-1}(M).\] 
If $\varphi$ has constant rank, 
then these are $(\R^n,\cdot)$-invariant submanifolds, and hence are 
$n$-graded subbundles of $E$ and $E'$, respectively. A sequence of morphisms of graded bundles (over a common base $M$, with base maps the identity) 
\[ \cdots E_\nu\to E_{\nu+1}\to E_{\nu+2}\to \cdots\]
is called \emph{exact} if for each $\nu$, the range of the map $E_\nu\to E_{\nu+1}$ equals the kernel of the map $E_{\nu+1}\to E_{\nu+2}$.  Below, we will encounter a number of examples of exact sequences of $n$-graded bundles.

\subsection{Linear approximation}
Given an $n$-graded bundle $E\to M$, the normal bundle $\nu(E,M)$ relative to the base submanifold is again an $n$-graded  bundle, with scalar multiplications  
obtained by applying the normal bundle functor to $\kappa_t$. Since 
the action $(\R^n,\cdot)$-action on $\nu(E,M)$ 
is linear, the normal bundle is in fact an $n$-fold graded \emph{vector} bundle as in Example \ref{ex:grvector}. We will refer to this bundle as the \emph{linear approximation} of $E$, and use the notation
\begin{equation} \nu(E,M)=E_\lin.\end{equation}
For any multi-index $i=(i_1\ldots,i_n)\in \N^n$, the element $t=(t_1,\ldots,t_n)\in \R^n$ acts as multiplication by 
$t^i=t_1^{i_1}\cdots t_n^{i_n}$ on $E_\lin^{-i}$. 

For an $n$-graded vector bundle \eqref{eq:multigradedvector}, we have $E_\lin\cong E$ canonically. 
For the $r$-th tangent bundle 
\eqref{eq:jet} we have that 
\[ (T_rM)_\lin^{-i}=\begin{cases}
TM & i\le r\\0 & \mbox{otherwise.}
\end{cases}\] 
The same description holds for the 
iterations \eqref{eq:iterated}, interpreting $r$ as a multi-index 
 $(r_1,\ldots,r_n)$, and using the partial ordering on multi-indices $k\in \Z^n$, where $k\ge k'$ if and only if $k_a\ge k_a'$ for all $a$.  For every morphism of graded bundles $\varphi\colon E'\to E$, applying the normal bundle functor gives a morphism of graded vector bundles $\varphi_\lin\colon E'_\lin \to E_\lin$ called the linear approximation of $\varphi$. 
 
\begin{theorem}\cite{gra:hig,gra:gra} For every graded bundle $E\to M$, there exists an isomorphism of graded bundles 
\[ \varphi\colon E_\lin\stackrel{\cong}{\lra} E,\] 
with linear  approximation 	the identity. 
\end{theorem}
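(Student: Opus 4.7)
My approach is to combine the local structure of $E$ with an averaging procedure based on the scalar action itself.

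First I would establish the statement locally. By the Grabowski--Rotkiewicz structure theorem cited above, every point of $M$ has a neighborhood $U$ over which $E|_U$ is isomorphic, as a graded bundle, to a product $U \times \mathsf{E}$, where $\mathsf{E}$ is a graded vector space carrying its linear scalar action. In such a chart $E|_U$ is literally a graded vector bundle and hence tautologically coincides with its linear approximation. This supplies local isomorphisms $\varphi_U\colon (E_\lin)|_U \to E|_U$ whose linear approximation is the identity.

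To globalize, I would first produce an auxiliary smooth map $\psi\colon E_\lin\to E$ (not yet equivariant) covering the identity on $M$ and with identity linear approximation, by patching the local $\varphi_U$ via a partition of unity on $M$; the patching is unproblematic because at each point of $M$ all local maps agree with the identity. I would then equivariantize $\psi$ under the scalar action by setting
\[
\varphi(v) \;=\; \lim_{t \to 0^+}\ \kappa_{1/t}\!\left(\psi\bigl(\kappa^{\lin}_{t}(v)\bigr)\right).
\]
Since $\kappa$ is polynomial of bounded weight in the fiber coordinates of any local chart, and $\psi$ matches the identity to first order along $M$, the conjugate $\kappa_{1/t}\circ\psi\circ\kappa^{\lin}_t$ expands as a polynomial in non-negative powers of $t$ in any such chart, so the $t\to 0^+$ limit exists and is smooth. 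Equivariance under the $(\R,\cdot)$-action is then automatic.

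The main obstacle is confirming smoothness of this limit and that $\varphi$ is a diffeomorphism. Both reduce to verifying that the linear approximation of $\varphi$ is still the identity: given that, the inverse function theorem along $M$ makes $\varphi$ a local diffeomorphism in a neighborhood of the zero section, and $\kappa$-equivariance then upgrades this to a global isomorphism, since every fiber of $E$ is swept out by the orbits of $\kappa$ emanating from its intersection with $M$.
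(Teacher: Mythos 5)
Your equivariantization step contains a genuine gap: the limit $\lim_{t\to 0^+}\kappa_{1/t}\circ\psi\circ\kappa^{\lin}_t$ need not exist once the graded bundle has order $r\geq 3$. In a graded chart with fiber coordinates $v_j$ of weights $w_j$, the coordinate functions of the conjugated map are $t^{-w_i}\psi_i(x,t^{w_j}v_j)$, and a monomial $v^\alpha$ in the Taylor expansion of $\psi_i$ contributes the power $t^{\langle w,\alpha\rangle-w_i}$. The hypothesis that $\psi$ matches the identity to first order along $M$ only kills the offending terms with $|\alpha|\leq 1$; it says nothing about terms of higher polynomial degree but lower \emph{weight}. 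Concretely, take $M=\pt$ and $\mathsf{E}=\R_{(1)}\oplus\R_{(3)}$ with coordinates $(v_1,v_3)$ of weights $1$ and $3$, and $\psi(v_1,v_3)=(v_1,\,v_3+v_1^2)$: this is a diffeomorphism with identity linear approximation, yet $\kappa_{1/t}\circ\psi\circ\kappa_t(v_1,v_3)=(v_1,\,v_3+t^{-1}v_1^2)$ diverges. (For $r\leq 2$ your weight count is correct, which is perhaps why the step looks plausible.) There is also a smaller problem in your globalization step: a partition-of-unity combination $\sum_i\rho_i\varphi_{U_i}$ is not defined, since the fibers of $E$ carry no linear or affine structure; this one is repairable, e.g.\ by instead averaging the inverses $\varphi_{U_i}^{-1}\colon E|_{U_i}\to E_\lin|_{U_i}$, which do land in a vector bundle, or by invoking the tubular neighborhood theorem.

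Note that the paper does not prove this statement; it is quoted from \cite{gra:hig,gra:gra}. The standard argument, and the one reflected elsewhere in this paper (Proposition \ref{prop:existenceofsplittings} and Theorem \ref{th:stages}), is an induction on the top weight $r$: the subbundle $E_\lin^{-r}$ acts on $E$ by a free and proper vector bundle action, making $E\to E/E_\lin^{-r}$ an affine bundle modelled on $E_\lin^{-r}$; one linearizes the base of this affine bundle by induction and then chooses a global section, which exists because the local sections form an affine space over the sections of a vector bundle, so partitions of unity apply at each stage. Your divergence problem is precisely what this degree-by-degree correction avoids: rather than removing all nonlinearities in one limit, one removes them one weight at a time, and at each step the correction to be made is honestly affine.
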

The isomorphism is not unique (only its restriction to $E_\lin^{-i}$, with $i$ maximal subject to the condition that  $E_\lin^{-i}\neq 0$, is unique). We refer to the choice of $\varphi$ as a \emph{linearization} of $E$.  
\begin{example}
For a double vector bundle $D$, as in \eqref{eq:dvb}, we have that
\[  D_\lin^{-1,0}=A,\ \ \ D_\lin^{0,-1}=B,\ \ \
 D_\lin^{-1,-1}=C\]
 (where $C$ is the core).  
Hence, a linearization is a choice of isomorphism of double vector bundles 
\[ \varphi\colon A\oplus B\oplus C\to D\]
whose linear approximation is the identity. The restriction of $\phi$ to $C\subset D$ is uniquely determined by this property. A linearization of a double vector bundle is also known as a \emph{splitting} or \emph{decomposition}. For direct proofs, see e.g.
\cite{del:geo,jot:mult,me:wei}. 
\end{example}

\subsection{Polynomial functions}
Let $E\to M$ be an $n$-graded bundle, with scalar multiplication $\kappa_t,\ t\in \R^n$. 
A function
$f\in C^\infty(E)$ is called a \emph{homogeneous polynomial function of degree $k\in \Z^n$} if 
\[ (\kappa_{t})^*f=t^k f\]
for all $t\in (\R-\{0\})^n$. Let  $C^\infty_\pol(E)^k$ be the space of homogeneous polynomials of degree $k$, and 
\[ C^\infty_\pol(E)=\bigoplus_{k\in \Z^n} C^\infty_\pol(E)^k\]
of polynomial functions on $E$. Using a linearization of $E$, we see that the  space $C^\infty_\pol(E)^k$ is non-trivial only if $k\ge 0$, and is the space of sections of a 
vector bundle  $\ca{P}(E)^k$. Taking a direct sum over $k$, we obtain a graded algebra bundle 
\begin{equation}\label{eq:pols} \ca{P}(E)=\bigoplus_{k\in \Z^n}\ca{P}(E)^k.\end{equation}
\begin{example}
Let $E=\bigoplus_{i\in \N^n}E^{-i}$ be a $(\Z_{<0})^n$-graded \emph{vector} bundle. Then 
\[ \ca{P}(E)^k=\bigoplus_{\{k_i\}} \bigotimes_{i}\on{Sym}^{k_i}(E^{-i})^*\]
where the sum is over sequences $\{k_i,\ i\in \N^n\}$ such that $\sum k_i\cdot i=k$, and $\on{Sym}$ indicates symmetric powers. This describes the situation for a general $n$-graded bundle $E$ by choice  of a linearization. 
\end{example}

\begin{example}
	For a double vector bundle \eqref{eq:dvb}, the algebra bundle $\P(D)$ is generated (as a \emph{unital} algebra bundle) by its components $\P(D)^{1,0},\ \P(D)^{0,1},\ \P(D)^{1,1}$. 
	There are canonical isomorphisms $\P(D)^{1,0}=A^*$ and $\P(D)^{0,1}=B^*$, and an exact sequence
	\[ 0_M\to A^*\otimes B^*\to \P(D)^{1,1}\to C^*\to 0_M,\] 	
	where the first map is given by multiplication in $\P(D)$. The bundle of double-linear functions $\P(D)^{1,1}$ is the `fat bundle' associated to $C^*$. This bundle plays an  important role in the theory of DVB-sequences of \cite{che:dou}, where it is shown that a 
	double vector bundle with side bundles $A,B$ and core $C$ is equivalent to such an exact sequence. Indeed, the exact sequence allows us to reconstruct the algebra bundle $\P(D)$, which in turn determines  $D=\Hom_\alg(\P(D),\R)$ (the fiberwise character spectrum). 	See also \cite{me:wei} for further details and related constructions. 
\end{example}

Morphisms of graded bundles $E\to E'$ over $M$ give morphisms of graded 
algebra bundles $\ca{P}(E')\to \ca{P}(E)$ by pullback of functions. Letting $0_M=M\times \{0\}$ (regarded as an $n$-graded bundle), we have  $\ca{P}(0_M)=M\times \R$. 
\begin{definition}
	The \emph{augmentation ideal} 
	$\P^+(E)$ is the kernel of the map $\P(E)\to \P(0_M)$ given by the inclusion $0_M\to E$.
\end{definition}
The sections of the augmentation ideal are the polynomial functions vanishing along $M\subset E$.

\subsection{Vertical vector fields}
The grading on polynomial functions induces gradings on all polynomial tensor fields on $E$. For the quotient construction, we are mainly interested in  vertical vector fields. 
Let 
\[ \V(E)\to M\] 
be the $\Z^n$-graded Lie algebra bundle of fiberwise graded derivations of $\P(E)$. The sections of 
$\V(E)^k$ are the  
\emph{vertical} vector fields $X$ on $E$ satisfying $\kappa_t^* X=t^k X$ for $t\in (\R-\{0\})^n$. 
The Lie algebra bundle 
$\V(E)$ is a graded  $\P(E)$-module. 

\begin{example}
	Let $E=\bigoplus_{i\in \N^n}E^{-i}$ be a $(\Z_{<0})^n$-graded \emph{vector} bundle. Then there is a degree preserving inclusion 
\[ E\hra \V(E)\]
as \emph{(fiberwise) constant vertical vector fields}. Since these span the vertical tangent space everywhere, we have 
$\V(E)=\P(E)\otimes E$ in this case. For a general $n$-graded bundle, such a decomposition depends on the choice 	of a linearization. 
\end{example}

\begin{example}
	For a double vector bundle \eqref{eq:dvb},  the space $\V(D)^k$ is trivial unless 
	$k\ge (-1,-1)$. A vector field of homogeneity $(-1,-1)$ restricts to the core, and the restriction map gives an isomorphism 
	\[\V(D)^{-1,-1}\cong C.\] 
	Similarly, a vector field of  homogeneity $(-1,0)$ restricts to the side bundle $A$; the kernel of the restriction map 
	is $\P(D)^{0,1}\otimes \V(D)^{-1,-1}=B^*\otimes C$ (linear functions on $B$ times 
	 core vector fields).  We may take
	\[ \V(D)^{-1,0}=\wh{A},\ \ \ \V(D)^{0,-1}=\wh{B},\] 
	as a definition of the \emph{fat bundles} described in the introduction.
\end{example}

Recall the augmentation ideal $\P^+(E)$, and let 
\[ \V^+(E)=\P^+(E)\V(E).\] 

\begin{lemma}
	Restriction to $M\subset E$ gives an exact sequence of graded bundles 
	\begin{equation}\label{eq:exactve} 
	0_M\to \V^+(E)\to \V(E)\to E_\lin\to 0_M.\end{equation}
	In particular, the sections of $\V^+(E)$ are the vertical polynomial vector fields vanishing along $M\subset E$. 
\end{lemma}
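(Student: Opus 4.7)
My plan is to construct the map $\V(E) \to E_\lin$ by evaluation at $M$, identify its kernel as $\V^+(E)$, and reduce the surjectivity question to the graded vector bundle case via a linearization.

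First I define the restriction map $r\colon \V(E)\to E_\lin$ as follows. A section of $\V(E)$ is a vertical vector field on $E$, that is, tangent to the fibers of $\kappa_0\colon E\to M$. Evaluating at a point $m\in M$ gives an element of the vertical tangent space $T_m E/T_m M$, which by definition of the linear approximation is the fiber $(E_\lin)_m$. Because the assignment $X\mapsto X|_M$ intertwines pullback by $\kappa_t$ with the $\R^n$-action on $\nu(E,M)=E_\lin$, the map $r$ is a morphism of $\Z^n$-graded bundles over $M$.

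Next I show $\ker(r)=\V^+(E)$. One inclusion is immediate: the sections of $\P^+(E)$ vanish along $M\subset E$, so any vector field of the form $fX$ with $f\in \Gamma(\P^+(E))$ and $X\in \Gamma(\V(E))$ vanishes at $M$, and thus $\V^+(E)\subset \ker(r)$. For the reverse inclusion I pick a linearization $\varphi\colon E_\lin\stackrel{\cong}{\to} E$ and use it to transport the problem onto the graded vector bundle $E_\lin=\bigoplus_{i\in \N^n} E_\lin^{-i}$. As noted in the excerpt, for a graded vector bundle one has $\V(E_\lin)=\P(E_\lin)\otimes E_\lin$, via the embedding of $E_\lin$ as fiberwise constant vertical vector fields. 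Under this identification, $\V^+(E_\lin)=\P^+(E_\lin)\otimes E_\lin$, while the restriction map sends $f\otimes v$ to $f|_M\cdot v$. Since $\P(E_\lin)=\P^+(E_\lin)\oplus \P(0_M)$ as graded $C^\infty(M)$-modules (with $\P(0_M)\cong C^\infty(M)$ corresponding to the constants), a vertical polynomial vector field restricts to zero on $M$ precisely when all its $\P(E_\lin)$-coefficients lie in $\P^+(E_\lin)$. This identifies $\ker(r)$ with $\V^+(E_\lin)$.

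The decomposition $\P(E_\lin)=\P^+(E_\lin)\oplus \P(0_M)$ also immediately yields surjectivity of $r$: given $v\in (E_\lin)_m$, the corresponding constant vertical vector field maps to $v$ under $r$. Transporting back via $\varphi$, we get surjectivity of $r\colon \V(E)\to E_\lin$ and the identification $\ker(r)=\V^+(E)$ in general. The last sentence of the lemma is then just a restatement of the characterization $\ker(r)=\V^+(E)$, since a section vanishes along $M$ iff its value at every $m\in M$ is zero.

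The only step requiring genuine care is checking that the identification $\ker(r)=\V^+(E)$ is independent of the choice of linearization $\varphi$, but this is automatic: both $\V^+(E)$ and $\ker(r)$ are defined intrinsically on $E$, so once they agree after a single choice of $\varphi$, they agree on the nose. I expect no essential obstacle beyond carefully writing out the $\P(E)$-module structure in the graded vector bundle model.
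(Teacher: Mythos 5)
Your proposal is correct and follows essentially the same route as the paper: choose a linearization to reduce to the graded vector bundle model $\V(E_\lin)=\P(E_\lin)\otimes E_\lin$, where the restriction map is visibly surjective with kernel $\P^+(E_\lin)\otimes E_\lin=\V^+(E_\lin)$. The paper's proof is a one-line version of exactly this argument; you have merely written out the details.
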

\begin{proof}
	Choosing a linearization $E\cong E_\lin$, we see that the restriction map $\V(E)\to E_\lin\subset TE|_M$ is surjective, with kernel the vertical vector fields vanishing along $M$.  
\end{proof}

\subsection{The bundle of nilpotent Lie groups}
The $n$-graded Lie algebra bundle $\V(E)$ has a subbundle, spanned by the components of strictly negative degree
\[ \g=\V(E)^{<0}=\bigoplus_{i\in \N^n} \V(E)^{-i}\]
Let $\g^+=\g\cap \V^+(E)$. The sections of $\g$ are polynomial vector fields on $E$ of strictly negative degree, and those in 
$\g^+$ are furthermore zero along $M\subset E$. 

\begin{lemma}
	The bundle $\g$ has finite rank and generates $\V(E)$ as a $\P(E)$-module. Restriction to $M\subset E$ gives an exact sequence of graded bundles 
	\begin{equation}\label{eq:exactve} 
	0_M\to \g^+\to \g\to E_\lin\to 0_M.\end{equation}
The action of $\g$ on $E$ is fiberwise transitive. (That is, the corresponding vector fields span the vertical subbundle of $TE$ everywhere.) 
\end{lemma}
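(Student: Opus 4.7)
The plan is to reduce the lemma to the case of an $n$-graded \emph{vector} bundle, where constant vertical vector fields make everything transparent. First I would work locally over $M$, using a trivialization of the $n$-graded bundle $E$ above a coordinate neighborhood: this realizes $E$ as a product and hence as an $n$-graded vector bundle $\bigoplus_{i\in\N^n}E^{-i}$ locally. All four assertions---finite rank of $\g$, generation of $\V(E)$ as a $\P(E)$-module, the exact sequence \eqref{eq:exactve}, and fiberwise transitivity---are local and intrinsic, so they transfer across such a trivialization. In the linear case one has $\V(E)=\P(E)\otimes E$ with $E\hra \V(E)$ as constant vertical vector fields, by the example preceding the lemma.

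Under this identification, a monomial section $f\otimes v$ with $f\in \P(E)^{j}$ and $v\in E^{-i'}$ has degree $j-i'$, so
\[ \g^{-i}\;=\;\bigoplus_{i'\ge i}\P(E)^{i'-i}\otimes E^{-i'}, \]
the sum running over multi-indices $i'\in\N^n$ with $i'\ge i$ componentwise. Only finitely many $E^{-i'}$ are nonzero, bounding the set of relevant $i$ and $i'$, and each $\P(E)^{i'-i}$ has finite rank; hence $\g$ has finite rank. The inclusion $E\subset \g$ as the constant vertical vector fields of strictly negative degree, combined with $\V(E)=\P(E)\otimes E$, immediately yields $\V(E)=\P(E)\cdot \g$.

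The summand with $i'=i$ in the decomposition above is $\P(E)^0\otimes E^{-i}\cong C^\infty(M)\otimes E^{-i}$, while summands with $i'>i$ have polynomial coefficients in $\P^+(E)$ and thus lie in $\g^+$. The restriction map to $M$ kills the latter and sends the former isomorphically onto $E_\lin^{-i}=E^{-i}$; this both proves surjectivity of $\g\to E_\lin$ and identifies its kernel with $\g^+$, giving the exact sequence. Fiberwise transitivity is then immediate: at every $e\in E_p$, the constant vertical vector fields $E_p\subset \g$ already span the vertical tangent space $T_eE_p$.

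The step I expect to require the most care is verifying exactness at $\g$: one must confirm that every degree-$(-i)$ element of $\g$ vanishing along $M$ genuinely has all its polynomial coefficients in $\P^+(E)$, rather than merely being congruent modulo lower-order terms to something that does. This follows from the clean splitting of $\g^{-i}$ into the $i'=i$ and $i'>i$ summands identified above; everything else in the proof is bookkeeping around this decomposition.
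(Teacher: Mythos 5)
Your proof is correct and follows essentially the same route as the paper's: the paper chooses a global linearization $E\cong E_\lin$ (rather than local trivializations), identifies $\V(E)=\P(E)\otimes E_\lin$, and reads off all four claims from the decomposition $\g=\bigoplus_{j-i<0}\P(E)^j\otimes E_\lin^{-i}$, which is exactly your decomposition of $\g^{-i}$. Your extra care about exactness at $\g$ (separating the $i'=i$ summand from the $\P^+(E)$-coefficient summands) is just the bookkeeping the paper leaves implicit in ``the statement is clear from this linearized model.''
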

\begin{proof}
Choosing a linearization $E\cong E_\lin$, and regarding sections of $E_\lin$ as fiberwise constant vector fields on $E_\lin$, we obtain an isomorphism 
\[ \V(E)=\P(E)\otimes E_\lin.\] 	
The statement is clear from this linearized model; in particular 
\begin{equation}\label{eq:g} \g=\bigoplus_{j-i<0} \P(E)^j\otimes E_\lin^{-i}\end{equation}
has finite rank, and it generates $\V(E)$ since already $E_\lin$ does. Similarly, the vector fields corresponding to $\g$ span the vertical space of $E\to M$, since already $E_\lin\subset \g$ does. 
\end{proof}	

\begin{example}
For a double vector bundle \eqref{eq:dvb}, this is the Lie algebra bundle 
\[ \g=\wh{A}\oplus \wh{B}\oplus C\] 
described in the introduction. The bracket restricts to a (possibly degenerate) pairing  
$\wh{A}\otimes \wh{B}=\V^{-1,0}(D)\otimes \V^{0,-1}\to \V^{-1,-1}(D)=C$, which is the warp pairing of Flari-Mackenzie \cite{mac:warp}. Note that this pairing determines the bracket, for degree reasons. Further details will be given in Section \ref{subsec:nilp} below. 
\end{example}

For degree reasons, the Lie algebra bundle $\g$ is nilpotent. By fiberwise exponentiation, we obtain an $n$-graded  Lie group bundle  (see Example \ref{ex:lie}) 
\[ G=\exp(\g)\to M,\] 
with an $n$-graded subbundle $G^+=\exp(\g^+)$.  
\begin{lemma}
The fiberwise $\g$-action on the $n$-graded bundle $E\to M$ exponentiates to a fiberwise action of $G$. 
\end{lemma}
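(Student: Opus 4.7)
The plan is to verify that each vertical vector field arising from a section of $\g$ is complete, so that the fiberwise Lie algebra action integrates to a fiberwise action of the simply connected nilpotent Lie group bundle $G$ by standard Lie-theoretic integration.

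First, I would fix a linearization $\varphi\colon E_\lin\to E$ in order to transport the question to the $n$-graded vector bundle $E_\lin$. Under the isomorphism $\V(E)\cong \P(E)\otimes E_\lin$ used in the proof of the preceding lemma, sections of $\g$ correspond to polynomial vertical vector fields of strictly negative total degree. Because $\P(E_\lin)$ is concentrated in nonnegative degrees, iterated application of such an $X$ lowers degree strictly at each step and hence annihilates every polynomial function after finitely many steps. Therefore the formal exponential $\exp(tX)^*=\sum_{k\ge 0}(t^k/k!)X^k$ acts on each polynomial function as a finite sum, polynomial in $t$, and defines a $t$-polynomial family of algebra automorphisms of $\P(E_\lin)$. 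Since $E_\lin$ is recovered from $\P(E_\lin)$ as its fiberwise character spectrum $\Hom_\alg(\P(E_\lin),\R)$, these automorphisms correspond to diffeomorphisms $\exp(tX)\colon E_\lin\to E_\lin$ over $M$ depending polynomially on $t$; this is the sought flow, and in particular shows that $X$ is complete.

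Next, I would integrate the $\g$-action fiberwise. For each $m\in M$, restriction of the action to the fiber $\g_m$ gives a Lie algebra homomorphism from a finite-dimensional nilpotent Lie algebra into the complete vertical vector fields on $E_m$. Since $G_m=\exp(\g_m)$ is simply connected, and its Baker--Campbell--Hausdorff product $*$ is polynomial (by nilpotency), one may define $\exp(X)\cdot e$ as the time-one flow of $X$ at $e$; the group law $\exp(X)\exp(Y)\cdot e=\exp(X*Y)\cdot e$ is then verified via BCH, producing a $G_m$-action that integrates the $\g_m$-action. Smoothness of the resulting map $G\times_M E\to E$ follows from smooth dependence of the polynomial flows on parameters together with smoothness of the BCH map. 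Independence of the chosen linearization is automatic because an action of a simply connected Lie group is determined by its infinitesimal action.

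The main obstacle is the completeness step, which requires combining the strictly negative degree of sections of $\g$ with the nonnegative grading of $\P(E_\lin)$ to promote the a priori formal exponential series into a genuine, globally defined flow. Once completeness is in hand, the passage from the $\g$-action to the $G$-action is standard nilpotent Lie theory.
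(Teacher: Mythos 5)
Your proposal is correct, and it follows the paper's overall strategy (reduce to fiberwise completeness of the generating vector fields, then invoke integration of the nilpotent Lie algebra action to the simply connected group bundle), but the key completeness step is argued quite differently. The paper first reduces to $n=1$ by passing to the diagonal $(\R,\cdot)$-action, then picks a positive function $\varrho$ on the fiber that is homogeneous of degree $1$ outside a compact set and observes that $\L_X\log\varrho$ is bounded for $X$ of strictly negative degree, so the flow cannot escape to infinity in finite time; this is a purely analytic estimate. You instead exploit that a vertical polynomial vector field of strictly negative degree is a locally nilpotent derivation of the non-negatively graded algebra $\P(E_\lin)$, so the exponential series terminates on each polynomial function and yields a one-parameter family of algebra automorphisms, hence (via the fiberwise character spectrum) a globally defined flow that is even polynomial in $t$. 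Your route avoids the reduction to $n=1$ and the choice of $\varrho$, stays within the algebraic framework the paper sets up for $\P(E)$ and $\V(E)$, and gives the slightly stronger conclusion that the flows are polynomial; the paper's argument is shorter and does not require identifying $E$ with the character spectrum of its polynomial algebra. Your BCH verification of the group law is also a reasonable hands-on substitute for the paper's appeal to the standard integration theorem for simply connected groups, and it goes through because all the relevant operator series are finite on each polynomial.
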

\begin{proof}
Recall that a Lie algebra action exponentiates to an action of the corresponding connected, simply connected Lie group if and only if its generating vector fields are complete. We will use this fact fiberwise, so we may assume $M=\pt$, 
with $E$ a graded vector space $\mathsf{E}=\bigoplus_{i\in \N^n} \mathsf{E}^{-i}$.  After passage to the diagonal action of $(\R,\cdot)\subset (\R^n,\cdot)$ (which does not change the space of polynomial vector fields of negative degree) it suffices to consider the case 
$n=1$.  Letting  $X=\sum_{i\in \N}X_i\in\mf{X}(\mathsf{E})$, with $X_i$ homogeneous of degree $-i<0$, 
we want to show that $X$ is complete. 
Choose a smooth function 
$\varrho\colon \mathsf{E}\to \R_{>0}$, homogeneous of degree $1$ outside a compact subset. 
Then
$\L_{X_i}\log\varrho$ is homogeneous of degree $-i< 0$ outside a compact subset, and hence is bounded on 
$\mathsf{E}$. Consequently, $\L_X\log(\varrho)$ 
is bounded on $\mathsf{E}$. 
This implies that the flow of $X$ cannot go to infinity in finite time, so $X$ is complete. 
\end{proof}

Since the $\g$-action on $E$ is fiberwise transitive, with stabilizer of $M$ the subbundle $\g^+$, it follows 
that the $G$-action on $E$ is fiberwise transitive, with stabilizer of $M$ the subbundle $G^+$. 
We see in particular that  every $n$-graded bundle is a homogeneous space for an $n$-graded Lie group bundle:
\[ E=G/G^+.\]
See Section \ref{subsec:nilp} for a description of the group bundle $G$ associated to double vector bundles. 

\begin{remark}
If $E\to M$ is an ordinary vector bundle (regarded as a $1$-graded bundle), we have that $\g= E$, 
where elements of $E$ are identified with constant vector fields on fibers, and $\g^+=0$ (the zero bundle). 
The corresponding group bundle is $G=E$ with the group multiplication given by addition, and the $G$-action on $E$ is just the translation 
action. 
\end{remark}

Let $i\in \N^n$ be maximal subject to the condition   $E_\lin^{-i}\neq 0$. Then $E_\lin^{-i}\cong \g^{-i}$ is in the center of $\g$, 
and its image under $\exp$ defines a central subgroup bundle 
\[ E_\lin^{-i}\subset \on{Cent}(G)\]
(with group structure the vector bundle addition). In terms of a linearization of $E_\lin\cong E$,  its action 
is given simply the translation. The  quotient by this action is again an $n$-graded bundle, with linear approximation 
\[ (E/E_\lin^{-i})_\lin=\bigoplus_{j\neq i} E_\lin^{-j}.\]

\subsection{Splitting short exact sequences}
A short exact sequence 
\begin{equation}\label{eq:exactsequence}
0_M\lra E'\stackrel{i}{\lra} E\stackrel{\pi}{\lra} E''\lra 0_M
\end{equation}
(where $0_M=M\times \{0\}$ denotes the zero bundle over $M$, regarded as an $n$-graded bundle) is equivalent to a surjective morphism of graded bundles $\pi\colon E\to E''$, with $i\colon E'\to E$ the inclusion of its kernel.  A \emph{splitting} of a short  exact sequence is a 
morphism  $j\colon E''\to E$ such that $\pi\circ j=\on{id}_{E''}$. As in the case of vector bundles, short exact sequences always split: 
\begin{proposition}\label{prop:existenceofsplittings} Every short exact sequence \eqref{eq:exactsequence} of $n$-graded bundles admits a splitting.  
\end{proposition}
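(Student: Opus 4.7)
The plan is to reduce the existence of a splitting to the corresponding statement for graded \emph{vector} bundles (where it is classical), and then transfer the splitting back to $E$ and $E''$ using a choice of linearizations.

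First, I would apply $(\cdot)_\lin$ to the given sequence, obtaining a short exact sequence of $n$-graded vector bundles
\[ 0_M \lra E'_\lin \stackrel{i_\lin}{\lra} E_\lin \stackrel{\pi_\lin}{\lra} E''_\lin \lra 0_M. \]
(Exactness on the left holds because $E'\hookrightarrow E$ is an embedding with the same base; on the right it follows from $\pi$ being a fiber bundle, hence a submersion along $M$.) In each fixed multi-degree this is an ordinary short exact sequence of vector bundles over $M$, and thus splits by standard techniques (fiberwise complement via a Riemannian metric and a partition of unity). Summing over multi-degrees gives a splitting $s\colon E''_\lin \to E_\lin$ with $\pi_\lin \circ s = \on{id}_{E''_\lin}$.

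Next I would invoke the linearization theorem to fix isomorphisms of $n$-graded bundles $\varphi_E\colon E_\lin \stackrel{\cong}{\lra} E$ and $\varphi_{E''}\colon E''_\lin \stackrel{\cong}{\lra} E''$, each with linear approximation the identity, and set
\[ j_0 := \varphi_E \circ s \circ \varphi_{E''}^{-1}\colon E'' \lra E. \]
Then $j_0$ is a morphism of $n$-graded bundles over $M$, and the composition $\psi := \pi \circ j_0\colon E'' \to E''$ satisfies $\psi_\lin = \pi_\lin \circ s = \on{id}_{E''_\lin}$. Once $\psi$ is known to be invertible, the map $j := j_0 \circ \psi^{-1}$ is the desired splitting, since $\pi \circ j = \psi \circ \psi^{-1} = \on{id}_{E''}$.

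The main obstacle is therefore the claim that any endomorphism $\psi\colon E'' \to E''$ of an $n$-graded bundle whose linear approximation is the identity is automatically an automorphism. My approach would be to pass to the dual picture of the graded polynomial algebra bundle $\P(E'')$ of \eqref{eq:pols}. The pullback $\psi^*$ is a grading-preserving algebra endomorphism, and the hypothesis $\psi_\lin = \on{id}$ forces $\psi^* - \on{id}$ to strictly raise the polynomial degree (i.e.\ the filtration by powers of the augmentation ideal $\P^+(E'')$) within each graded component. Since within each multi-degree $k \in \N^n$ the polynomial degree is bounded above by the sum of components of $k$, the operator $\psi^* - \on{id}$ is nilpotent on $\P(E'')^k$; hence $\psi^*$ is invertible as a morphism of graded algebra bundles, and so $\psi$ is an automorphism of $E''$. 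Equivalently, one can argue fiberwise by induction on multi-degree in a chosen linearization, noting that in each new multi-degree the correction to $\on{id}$ is a polynomial in strictly lower-degree data and can therefore be inverted degree by degree.
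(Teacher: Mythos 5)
Your argument is correct, but it takes a genuinely different route from the paper's. The paper proves the proposition by induction on the rank of $E$: it picks $i\in\N^n$ maximal with $E_\lin^{-i}\neq 0$, splits both the vector bundle sequence in degree $-i$ and the quotient sequence of $n$-graded bundles obtained by dividing out the central vector bundle actions of $E_\lin^{-i}$ and $(E''_\lin)^{-i}$, and then corrects an arbitrary pair of splittings $s,s''$ by noting that the discrepancy $f=\pi\circ s-s''\circ\pi$ takes values in $(E''_\lin)^{-i}$ and can be absorbed, after lifting to $\widehat{f}$, via the action of $E_\lin^{-i}$ on $E$. You instead linearize globally: split the linearized sequence of graded vector bundles, transport the splitting through linearizations of $E$ and $E''$, and repair the failure of $\pi\circ j_0=\on{id}$ using the lemma that an endomorphism $\psi$ of an $n$-graded bundle with $\psi_\lin=\on{id}$ is an automorphism. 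That lemma is the one genuinely new ingredient in your approach, and your justification is sound -- either via nilpotence of $\psi^*-\on{id}$ on each $\P(E'')^k$ with respect to the filtration by powers of the augmentation ideal, or by degree-by-degree inversion in a linearization; in the latter form you should note that the inverse is given by universal polynomial expressions in the components of $\psi$, so it is automatically a smooth morphism of $n$-graded bundles. Your route is shorter and concentrates the work in the linearization theorem plus this unipotency lemma; the paper's induction corrects the splitting one degree at a time through an intrinsic vector bundle action, and its peeling-off of top-degree summands anticipates the factorization of quotient maps into towers of affine bundles in Theorem \ref{th:stages}.
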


\begin{proof}
	Let $i\in \N^n$ be maximal subject to the condition $E_\lin^{-i}\neq 0$. The exact sequence for 
	$E$ restricts to an exact sequence of vector bundles 
	\begin{equation}\label{eq:sequencevector}  0_M\to (E_\lin')^{-i}\to E_\lin^{-i}\stackrel{\pi}{\lra} (E_\lin'')^{-i}  \to 0_M.\end{equation}	
	Taking the quotient by the respective vector bundle actions, this produces an exact sequence of $n$-graded bundles 
	\begin{equation}\label{eq:sequencequotient}
	0_M\to E'/(E_\lin')^{-i}\to E/E_\lin^{-i}\stackrel{\pi}{\lra} E''/(E_\lin'')^{-i}  \to 0_M.\end{equation}
	By induction on  the rank of $E$,  \eqref{eq:sequencequotient} admits a splitting; fix such a splitting and also a splitting of \eqref{eq:sequencevector}. 
	To prove the proposition, it suffices to find isomorphisms 
	\begin{equation}\label{eq:ismo}
	E\cong (E/E_\lin^{-i})\times_M E_\lin^{-i},\ \ \ \ 
	E''\cong (E''/(E''_\lin)^{-i})\times_M (E''_\lin)^{-i}\end{equation}
	that are compatible with the projection $\pi$, in the sense that
	$\pi(x,w)=(\pi(x),\pi(w))$. Let  
	\[ q\colon E\to E/E_\lin^{-i},\ \ \ q''\colon E''\to E''/(E''_\lin)^{-i}\] 
	be the two quotient maps. The choice of isomorphisms \eqref{eq:ismo} amount to splittings
	\[ s\colon E/E_\lin^{-i}\to E,\ \ s''\colon E''/(E''_\lin)^{-i}\to E'',\] i.e., 
	$q\circ s=\on{id},\ q''\circ s''=\on{id}$. Compatibility with $\pi$   is equivalent to 
	commutativity of the diagram 
	\[\xymatrixcolsep{5pc}
\xymatrix{ {E/E_\lin^{-i}} \ar[r]_s \ar[d]_\pi & E \ar[d]^\pi\\
	E''/(E_\lin'')^{-i} \ar[r]_s & E''}
\]
	Start with arbitrary splittings $s,s''$ (e.g., defined  by linearizations of $E,E''$), and  
	consider the difference
	\[ f=\pi\circ s-s''\circ \pi\colon E/E_\lin^{-i}\to E''.\]
	%
From 	$q''\circ f=0$, we see that $f$ takes values in $(E''_\lin)^{-i}\subset E''$. The given splitting of 
	\eqref{eq:sequencevector} lifts  $f$ to a 
	morphism 
	\[ \widehat{f}\colon E/E_\lin^{-i}\to E_\lin^{-i}.\] Then 
	$\wt{s}=(-\widehat{f})\cdot s$ is again a splitting of $q$ (the dot indicates the action of the vector bundle 
	$E_\lin^{-i}$ on $E$), and $\pi\circ \wt{s}-s''\circ \pi=0$, as desired. 
\end{proof}

\section{Quotient constructions}
In contrast to vector bundles, an $n$-graded subbundle of an $n$-graded bundle does not 
determine a quotient  in a canonical way. In this section, we will clarify what additional structure is needed. Throughout, $E$ will denote an $n$-graded bundle over $M$, with base projection $p\colon E\to M$.

\subsection{Criterion for quotients}
Consider a quotient of $E$, described by a short exact sequence
\[ 0_M\to E'\to E\to E''\to 0_M.\]
The tangent bundle to the quotient map $\pi\colon E\to E''$ is a $\kappa_t$-invariant integrable distribution $T_\pi E\subset TE$.   View the elements of $\V(E)$ as vector fields on the respective fiber of $E\to M$, and let 
\[ \V(E)_\pi\subset \V(E)\] 
be the subset for which the vector field is tangent to the fibers of $\pi$. Thus, the action map 
\begin{equation}\label{eq:actionmap} \varrho\colon p^*\V(E)\to TE\end{equation}
takes $p^*\V(E)_\pi$  to $T_\pi E$. Put $\V^+(E)_\pi=\V^+(E)\cap \V(E)_\pi$. 
\begin{lemma}\label{lem:prep}
The subset $\V(E)_\pi\subset \V(E)$ is a bundle of graded Lie subalgebras, and of $\P(E)$-submodules. It has the property  
\[ \V^+(E)_\pi=\P^+(E)\V(E)_\pi.\]
\end{lemma}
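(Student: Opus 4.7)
I would prove the three assertions in sequence: first that $\V(E)_\pi$ is $\Z^n$-graded, next that it is closed under Lie bracket and under multiplication by $\P(E)$, and finally the identity $\V^+(E)_\pi = \P^+(E)\V(E)_\pi$. The first two are essentially formal consequences of $\kappa_t$-invariance of the fibers of $\pi$ together with standard properties of vector fields tangent to a submersion; the third is the technical core and I would handle it by reducing to a linearized model via Proposition \ref{prop:existenceofsplittings}.

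\textbf{Grading, Lie subalgebra and $\P(E)$-submodule.} Since $\pi$ is a morphism of $n$-graded bundles, it intertwines the scalar multiplications, so $T_\pi E\subset TE$ is $\kappa_t$-invariant. Hence $\kappa_t^*X\in\V(E)_\pi$ whenever $X\in\V(E)_\pi$, and an eigenspace decomposition shows $\V(E)_\pi$ inherits the $\Z^n$-grading of $\V(E)$. Closure under Lie bracket is the classical fact that the commutator of two vector fields tangent to the fibers of a submersion is again tangent; closure under multiplication by $\P(E)$ is immediate because multiplying a vector field by a scalar function preserves tangency to any distribution.

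\textbf{The identity $\V^+(E)_\pi = \P^+(E)\V(E)_\pi$.} The inclusion $\supseteq$ is immediate from $\V(E)_\pi\subset \V(E)$. For the reverse inclusion, I would use Proposition \ref{prop:existenceofsplittings} to choose a splitting $j\colon E''\to E$ and then, by an inductive linearization compatible with $E'\hookrightarrow E\twoheadrightarrow E''$, arrange identifications $E\cong E_\lin$ and $E''\cong E''_\lin$ under which $\pi$ becomes the linear projection $E_\lin = E'_\lin\oplus E''_\lin \to E''_\lin$. In this model, the identification $\V(E)=\P(E)\otimes_{C^\infty(M)}E_\lin$ from the preceding lemma specializes to $\V(E)_\pi = \P(E)\otimes E'_\lin$ and $\V^+(E) = \P^+(E)\otimes E_\lin$, and the identity reduces to the trivial linear equality $(\P^+(E)\otimes E_\lin)\cap(\P(E)\otimes E'_\lin) = \P^+(E)\otimes E'_\lin$. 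As a by-product, $\V(E)_\pi$ has locally constant rank and is therefore a genuine sub-bundle.

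\textbf{Main obstacle.} The non-routine step is arranging a linearization of $E$ under which both the inclusion $E'\hookrightarrow E$ \emph{and} the projection $\pi$ become linear. Without such compatibility, $\V(E)_\pi$ need not take the clean tensor form $\P(E)\otimes E'_\lin$, and the direction $\V^+(E)_\pi\subseteq \P^+(E)\V(E)_\pi$ is opaque: the defining expression $\V^+(E)=\P^+(E)\V(E)$ only supplies a decomposition into general elements of $\V(E)$, not into vector fields already tangent to the fibers of $\pi$. The splitting from Proposition \ref{prop:existenceofsplittings}, combined with an inductive linearization in the spirit of the proof of that proposition (first linearize $E''$, pull back via $j$, then extend across the kernel $E'$), is precisely what delivers the compatibility that makes the module identity trivial.
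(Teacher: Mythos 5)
Your proof is correct and follows essentially the same route as the paper: the crux in both is invoking Proposition \ref{prop:existenceofsplittings} to realize $\pi$ as the projection $E\cong E'\times_M E''\to E''$, after which the module identity becomes a direct computation. The paper stops at the decomposition $\V(E)_\pi=\P(E'')\otimes \V(E')$ rather than further linearizing $E'$ and $E''$ to reach $\P(E)\otimes E'_\lin$, but that extra step is a cosmetic difference.
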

\begin{proof}
	By Proposition \ref{prop:existenceofsplittings}, we may choose a splitting $E\cong E'\times_M E''$ (with $\pi$ given by projection to the second factor). This   
	identifies $\P(E)=\P(E')\otimes \P(E'')$, and determines a direct sum decomposition decomposition as $\P(E)$-submodules
	\[\V(E)=\big(\P(E')\otimes \V(E'')\big)\oplus \big(\P(E'')\otimes \V(E')\big).\]
	The second summand in this decomposition is $\V(E)_\pi$;
   in particular, $\V(E)_\pi$ is a graded subbundle. It is immediate that 
	$\V(E)_\pi$ is closed under the bracket operation of $\V(E)$. We see furthermore 
	\[\V^+(E)_\pi =(\P(E'')\otimes \V^+(E'))+ (\P^+(E'')\otimes \V(E'));\]
	since $\P^+(E)=\P^+(E')\otimes \P(E'')+\P(E')\otimes \P^+(E'')$, but this is the same as $\P^+(E)\cdot \V(E)_\pi$. 
\end{proof}
The following result shows that the property observed in Lemma \ref{lem:prep} actually 
characterizes the subbundles of the form $\V(E)_\pi$.

\begin{theorem}\label{th:quotients}
	Suppose $\L\subset \V(E)$ is a graded Lie subalgebra bundle, which is also a $\P(E)$ submodule. Then $\L=\V(E)_\pi$ for a 
	(unique) quotient map $\pi\colon E\to E''$ if and only if the submodule $\L^+=\L\cap \V^+(E)$ satisfies 
	\begin{equation}\label{eq:condition} \L^+=\P^+(E)\, \L.\end{equation}
	In terms of the Lie algebra bundle $\h=\L\cap \V(E)^{<0}$, and the corresponding Lie group bundle $H$, we have that 
	\[ E'=H\cdot M\subset E,\ \ \ \ E''=E/H.\]
\end{theorem}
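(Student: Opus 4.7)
The forward implication ``if $\L = \V(E)_\pi$ then $\L^+ = \P^+(E)\L$'' is the content of Lemma~\ref{lem:prep}, so the substance lies in the converse, in the identifications of $E'$ and $E''$, and in the uniqueness claim. Uniqueness of $\pi$ (once it exists) will be automatic: the fibers of $\pi$ must be the integral leaves of the distribution $\varrho(p^*\L) \subset TE$ coming from \eqref{eq:actionmap}, and so $\pi$ is determined by $\L$ up to isomorphism of the target.

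For the converse, my plan is to construct the quotient explicitly as $E'' = E/H$. I set $\h := \L \cap \V(E)^{<0}$; since $\L$ is a graded Lie subalgebra of $\V(E)$ and $\g = \V(E)^{<0}$ is closed under brackets by degree reasons, $\h$ is a graded Lie subalgebra bundle of $\g$. By nilpotency of $\g$, the exponential $H := \exp(\h)$ is a closed subgroup bundle of $G$, preserved under the $\kappa_t$-action since $\h$ is graded. The $H$-action on $E$ is the restriction of the $G$-action constructed in the previous subsection, and it commutes with the scalar multiplications. So once $E/H$ is known to be a smooth $n$-graded bundle, the scalar multiplications descend automatically and $\pi\colon E \to E/H$ becomes a morphism of $n$-graded bundles.

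The main obstacle is the smoothness of $E/H$. I would argue this fiberwise: over $m \in M$, the fiber $E_m = G_m/G_m^+$ is a nilpotent homogeneous space, and $E_m/H_m = H_m\backslash G_m/G_m^+$. This double coset space is smooth provided $H_m G_m^+$ is a closed subgroup of $G_m$, equivalently provided $\h_m + \g_m^+$ is a Lie subalgebra of $\g_m$. The hypothesis $\L^+ = \P^+(E)\L$ is precisely what makes this work: it forces $\L$ to be ``generated at $M$'', in the sense that its $\V^+(E)$-part is exhausted by $\P^+(E)$-multiples of elements of $\L$. Translating this into a bracket statement---for $X \in \h$ and $Y \in \g^+$, writing $Y$ via the hypothesis and using that $\L$ is both a Lie subalgebra and a $\P(E)$-submodule (so that $[fX, Y] = f[X,Y] - Y(f)X$ stays in $\L$) to conclude $[X,Y] \in \h + \g^+$---is the technical heart of the proof. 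Local triviality of $E/H \to M$ then follows from the fiberwise analysis together with the existence of linearizations (Proposition~\ref{prop:existenceofsplittings}).

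Once smoothness of $E'' := E/H$ is established, I verify the remaining claims. For $\V(E)_\pi = \L$: elements of $\h$ are tangent to the $H$-orbits, which are the fibers of $\pi$, so $\h \subset \V(E)_\pi$; since $\V(E)_\pi$ is a $\P(E)$-submodule, this yields $\P(E)\cdot\h \subset \V(E)_\pi$. Equality with $\L$ follows by comparing ranks at $M$, where both $\L$ and $\V(E)_\pi$ satisfy the hypothesis of the theorem and thus are determined, via the exact sequence \eqref{eq:exactve}, by their images in $E_\lin$---which are the same, namely the graded subbundle $\ul{\L} \subset E_\lin$ of directions collapsed by $\pi$. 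For $E' = H\cdot M$: by construction $E' = \pi^{-1}(M)$, and by fiberwise transitivity of $H$ on its orbits this preimage is exactly the $H$-orbit of the zero section, i.e.\ $H\cdot M$.
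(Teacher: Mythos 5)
The forward direction, the uniqueness remark, and the identification $E'=H\cdot M$ are fine, but the step you yourself call the technical heart of the converse does not work. The hypothesis $\L^+=\P^+(E)\,\L$ does \emph{not} imply that $\h+\g^+$ is a Lie subalgebra, and that condition is not the right criterion for smoothness of $E/H$ in any case. Concretely, take $M=\pt$ and $E=\R^2$ with coordinates $(a,c)$ of weights $1,2$. Then $\g=\V(E)^{<0}$ has basis $\partial_a,\ a\partial_c$ in degree $-1$ and $\partial_c$ in degree $-2$, with $[\partial_a,a\partial_c]=\partial_c$ (a Heisenberg algebra) and $\g^+=\R\, a\partial_c$. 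The submodule $\L=\P(E)\cdot(\partial_a+a\partial_c)$ is a graded Lie subalgebra bundle and $\P(E)$-submodule satisfying $\L^+=\P^+(E)\L$, so the theorem applies: the quotient is by the free and proper flow of $\partial_a+a\partial_c$, and $E''\cong\R$ with global coordinate $c-\tfrac{1}{2}a^2$. Yet $\h=\R(\partial_a+a\partial_c)$ gives $\h+\g^+=\R\partial_a\oplus\R\,a\partial_c$, which is not closed under brackets since $[\partial_a,a\partial_c]=\partial_c$. The bracket manipulation you sketch cannot be carried out either: for $Y\in\g^+$ there is no reason to have $Y\in\L$, so the hypothesis (which constrains only $\L^+$, not $\g^+$) gives no handle on $[X,Y]$. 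What the hypothesis actually buys---and what the paper's proof extracts via the iterated degree-lowering argument---is a finite-rank graded complement $S$ to $\L^+$ in $\L$ with $\L=\P(E)S$; since $\varrho$ restricted to $p^*S$ is injective along $M$ (because $S\cap\V^+(E)=0$) and hence, by homogeneity, everywhere, the distribution $\varrho(p^*\L)=\varrho(p^*S)$ has \emph{constant rank}, and Frobenius integrability together with $\kappa_t$-invariance produce the quotient. Constant rank of the orbit distribution, not a subgroup property of $H_mG_m^+$, is the decisive point.

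The same example undermines your closing argument that $\L=\V(E)_\pi$ ``by comparing ranks at $M$'': a submodule satisfying the hypothesis is \emph{not} determined by its image in $E_\lin$. Indeed, $\P(E)\partial_a$ and $\P(E)(\partial_a+a\partial_c)$ both satisfy the hypothesis and both have image $E_\lin^{-1}$ in $E_\lin$, yet they define different quotients (the fibers $c=\on{const}$ versus $c-\tfrac{1}{2}a^2=\on{const}$). You do obtain $\P(E)\h\subset\V(E)_\pi$ and $\P(E)\h\subset\L$, but the missing inclusion $\L\subset\P(E)\h$ is again exactly the consequence of the hypothesis that the paper proves by lowering degrees; without it the comparison of $\L$ with $\V(E)_\pi$ does not close.
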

\begin{proof}
Lemma \ref{lem:prep} shows that this condition is satisfied if $\L$ is of the form $\L=\V(E)_\pi$. For the converse,  suppose that $\L\subset \V(E)$ is a graded $\P(E)$-submodule, closed under brackets, and satisfying \eqref{eq:condition}. 
Let $S\subset \L$ be a graded subbundle complementary to $\L^+$, so that 
\[ \L=S\oplus \L^+.\]
Note that $\L$ and $\L^+$ coincide in non-negative degrees (since $\V(E)$ and $\V^+(E)$ agree in non-negative degrees, by the exact sequence \eqref{eq:exactve}). Hence $S$ is non-trivial only in strictly negative degree, and in particular is of finite rank. 
Using \eqref{eq:condition}, any  $x\in (\L^+)^k$ may be written as
\begin{equation}\label{eq:xexpression} x=\sum_a p_a x_a\end{equation}
where 
$x_a \in \L^{k_a}$ with $k_a<k$, so $p_a\in \P^+(E)$. 
Each $x_a$ appearing in \eqref{eq:xexpression} may be decomposed into its $S$ and $\L^+$ components; 
to the $\L^+$ component we may apply  \eqref{eq:condition} again, and so on. Lowering the degrees of $x_a$ in this way, we eventually obtain an expression \eqref{eq:xexpression} where all $x_a\in S$. That is, 
\[\L^+=\P^+(E)\,S.\]
Hence also $\L=\P(E)\,S$, which implies that the image of $p^*\L$ under the action map \eqref{eq:actionmap} coincides with the image of $p^*S$. Along 
$M\subset E$, the map $\varrho$ is the surjection $\V(E)\to E_\lin\subset TE$, with kernel $\V^+(E)$. Hence it restricts to an injective map $S\to E_\lin$ along $M$. We conclude that $\varrho\big|_{p^*S}\colon p^*S\to TE$ is injective along $M$, hence also on some 
open neighborhood of $M$ in $E$. By homogeneity, it is injective everywhere. 

We conclude that $\Gamma(\L)$ (regarded as a subalgebra of $\mf{X}(E)$ by the action $\varrho$) 
defines a constant rank, $\kappa_t$-invariant distribution on $E$. Since 
$\Gamma(\L)$ is involutive, this distribution is Frobenius integrable, and so defines a foliation of $E$. Let 
\[ \h=\L\cap \V(E)^{<0}=\L\cap \g\subset \g\]
be the nilpotent Lie subalgebra bundle given as the summand of strictly negative degree, and $H\subset G$ the corresponding Lie group bundle. Since $S\subset \h$, the orbits of the Lie algebra action of $\L$ coincide with those of the $H$-action, 
\[  H\times_M E\to E.\]
Since this action preserves $n$-gradings, the quotient space $E''=E/H$ is an $n$-graded manifold, with $\pi$ as its quotient map.  The kernel of the quotient map is the leaf through $M$, i.e. $E'=H\cdot M$. 
\end{proof}

\begin{remark}
	The condition \eqref{eq:condition} can be restated in terms of the subalgebra bundle $\h=\L\cap \g$ (the strictly negative summands of $\L$), as the requirement that 
	\begin{equation}\label{eq:condition1}
	\h\cap \P^+(E)\g=\P^+(E)\h\cap \g.\end{equation}
	The exact sequence \eqref{eq:exactve} restricts to an exact sequence 
	of graded vector bundles
	\begin{equation}\label{eq:exacth} 0_M\to \h^+\to  \h\to E_\lin'\to 0_M.\end{equation}
\end{remark}

%

\begin{remark}\label{lem:smallerh}
Suppose $\k\subset \g$ is a graded Lie subalgebra bundle satisfying the weaker condition 
\begin{equation}\label{eq:weakercondition}
\k\cap \P^+(E)\g\subset \P^+(E)\k\cap \g.
\end{equation}	
Then  $\h=\P(E)\k\cap \g$ satisfies \eqref{eq:condition1}, and so determines a 
quotient $\pi\colon E\to E''=E/K=E/H$, with kernel $E'$. 
\end{remark}

\subsection{Iterated quotients}
Rather than taking a quotient of $E$ by the action of a nilpotent group bundle, we may also take an iterated quotient by vector bundle actions. In other words, every quotient map factors as a tower of affine bundles. 
		
\begin{theorem}\label{th:stages}
Every quotient map $\pi\colon E\to E''$ of $n$-graded bundles over $M$ factors as a sequence of morphisms 
\[ E\to E_1\to E_2\to \cdots \to E''\]
where each map is a quotient by a free and proper vector bundle action. 
\end{theorem}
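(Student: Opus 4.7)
The plan is to factor $\pi$ by first splitting the exact sequence and then peeling off the kernel $E'$ one graded component at a time. I would begin by applying Proposition \ref{prop:existenceofsplittings} to choose a splitting, so that $E \cong E' \times_M E''$ and $\pi$ becomes the projection to the second factor. The linearization theorem of Grabowski--Rotkiewicz then yields a further identification
\[ E' \;\cong\; E'_\lin \;=\; \bigoplus_{i \in \N^n} (E'_\lin)^{-i} \]
as an $n$-graded vector bundle. Let $i_1, \ldots, i_N$ enumerate the multi-indices for which $(E'_\lin)^{-i}$ is non-zero (in any order).

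The factorization will be the obvious tower: for $0 \le k \le N$, set
\[ E_k \;=\; \Big(\bigoplus_{\ell > k} (E'_\lin)^{-i_\ell}\Big) \times_M E'',\]
so that $E_0 = E$ and $E_N = E''$. The morphism $E_{k-1} \to E_k$ will be defined as the quotient by the translation action of the vector bundle $(E'_\lin)^{-i_k}\to M$ on the corresponding summand of the first factor. By construction, the composition $E = E_0 \to E_1 \to \cdots \to E_N = E''$ agrees with $\pi$.

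The only substantive step is to verify that each translation defines a morphism in the category of $n$-graded bundles, i.e.\ is $(\R^n,\cdot)$-equivariant in the sense where the acting group element also transforms: for $v \in (E'_\lin)^{-i_k}$ the identity $\kappa_t(v \cdot e) = (t^{i_k}v) \cdot \kappa_t(e) = \kappa_t(v)\cdot \kappa_t(e)$ must hold, which reduces to the compatibility of the vector bundle addition on the homogeneous summand $(E'_\lin)^{-i_k}$ with scaling by weight $i_k$. Freeness and properness are automatic for translations in a vector bundle. I do not anticipate any serious obstacle beyond this equivariance check, since the required ingredients—existence of splittings for exact sequences and the linearization of $n$-graded bundles—are already in place.
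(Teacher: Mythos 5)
Your argument is correct, but it takes a genuinely different route from the paper's. You reduce to the trivial case by choosing a splitting $E\cong E'\times_M E''$ (Proposition \ref{prop:existenceofsplittings}) together with a linearization of $E'$, after which $\pi$ is literally the projection off a graded vector bundle factor, and the tower is obtained by discarding one homogeneous summand at a time; the equivariance identity $\kappa_t(v\cdot e)=\kappa_t(v)\cdot \kappa_t(e)$ that you isolate is indeed the only thing to check, and it holds because $\kappa_t$ acts linearly, with weight $t^{i_k}$, on $(E'_\lin)^{-i_k}$. The paper instead argues intrinsically with the nilpotent Lie algebra bundle $\h\subset\g$ from Theorem \ref{th:quotients}: it peels off the central subalgebra $\h^{-i}$ with $i$ maximal, sets $E_1=E/\exp(\h^{-i})$, checks that the quotient algebra $\h/(\P(E)\h^{-i}\cap\h)$ still acts on $E_1$, and inducts. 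The trade-off is this: your construction is more elementary and makes freeness, properness, and the identification of each stage transparent, but every intermediate bundle depends on the (highly non-canonical) choices of splitting and linearization; the paper's intermediate quotients and the vector bundle actions on them are determined canonically by $\h$, which matters if the tower of affine bundles is to be regarded as structure attached to the quotient map rather than to a trivialization of it. One small caveat: Proposition \ref{prop:existenceofsplittings} as stated only produces a section $j\colon E''\to E$ of $\pi$, whereas you use the stronger product decomposition $E\cong E'\times_M E''$ (a graded bundle has no a priori addition with which to assemble one from $j$); this decomposition does follow from the proof of that proposition, and the paper invokes the same identification in Lemma \ref{lem:prep}, but it deserves a sentence.
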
 
\begin{proof}
	Let $H=\exp\h\subset G$ be as in  Theorem \ref{th:quotients}, so that $E''=E/H$.
Let $i\in \N^n$ be maximal subject to the condition that $\h^{-i}\neq 0$. For degree reasons, $\h^{-i}$ is contained in the center of $\h$. Its image under $\exp\colon \h\to H$ is a central subgroup bundle $\exp\h^{-i}$, with group structure the vector bundle addition 
of $\h^{-i}$. Let
\[ E_1=E/\exp(\h^{-i})\] 
be the 
quotient under the vector bundle action of $\exp\h^{-i}$. 

The quotient by $\h^{-i}\subset \h$ is the same as the quotient by $\P(E)\h^{-i}\cap \h$, which is a normal Lie subalgebra bundle of $\h$. Hence, the quotient $\h_1=\h/(\P(E)\h^{-i}\cap \h)$ acts on $E_1$, and $E_1/H_1=E''$. 
By induction, the quotient map $E_1\to E''$ factors as a sequence of quotients by  free and proper vector bundle actions. 
\end{proof}

\begin{example}
Let $E$ be a graded bundle (with $n=1$) of order $r$, i.e. $E^{-i}=0$ for $i>r$, 
and take $\h=\g=\V(E)^{<0}$. The quotient map to $E/G=M$ factors as a sequence of quotient maps 
\[ E\to E_1\to E_2\to \ldots \to M\]
where the first quotient is by a vector bundle action of $E_\lin^{-r}$, the second quotient is by a vector bundle action of $E_\lin^{-r+1}$, and so on. In particular, the tower of higher tangent bundles 
\[ T_rM\to T_{r-1}M\to \cdots \to TM\to M\]
is of this form; here each arrow is a quotient map by an action of $TM$. 
\end{example}

\begin{example}
	The bundle projection $D\to M$ of a double vector bundle \eqref{eq:dvb} may be factored as 
	\[ D\to A\times_M B\to A\to M\]
	where the first map is a quotient by the core $C$, the second map is  a quotient by $B$, and the last map is a quotient by $A$. Note that there are no natural actiuons of $A,B$ on $D$. 
\end{example}

\section{Double vector bundles}

In this section, we specialize the general quotient construction to double vector bundles (repeating a few facts already mentioned). 
For background on double vector bundles, see e.g. \cite{gra:hig,gra:vb,mac:gen,me:wei}.

\subsection{Linear approximation}
For any double vector bundle 
\begin{equation}\label{eq:dvb1}
	\xymatrix{ {D} \ar[r] \ar[d] & B \ar[d]\\
		A \ar[r] & M}
	\end{equation}
the map $D\to A\times_M B$ given by the 
horizontal and vertical base projections is a surjective submersion, with kernel the {core}  $C\to M$ of the double vector bundle. 
Thus, by definition there is a short exact sequence of double vector bundles \cite[Section 2.2]{gra:vb}
\[ 0_M\to C\to D\to A\times_M B\to 0_M.\]
Since the horizontal and vertical scalar multiplications coincide on  $C$, the core may be regarded as a vector bundle, $C\to M$.  The linear approximation of $D$ is a bigraded vector bundle $D_\lin\to M$ with summands
	 \[D_\lin^{-1,0}=A,\ \ \ D_\lin^{0,-1}=B,\ \ \ D_\lin^{-1,-1}=C.\]
The scalar multiplications of $t=(t_1,t_2)\in \R^2$ on $D_\lin$ are given by 
\[ \kappa_t (a,b,c)=(t_1 a,\ t_2 b,\ t_1t_2 c),\ \ \ (a,b,c)\in A\times_M B\times_M C;\]
after a choice of linearization $D_\lin\cong D$ this serves as a model for $D$. 
 
\subsection{The nilpotent group bundle $G\to M$}	 \label{subsec:nilp}
The Lie algebra bundle $\g=\V(D)^{<0}$ has components in the same degrees as $D_\lin$. The bundle map 
$\g\to D_\lin$ is an isomorphism in bidegree $(-1,-1)$ (since $i=(1,1)\in \N^2$ is the unique maximal index for which $\g^{-i}\neq 0$); hence we have 
\[\g^{-1,-1}=C.\] 
Sections of $C$ define vector fields on $D$ of homogeneity $-1,-1$, these are the \emph{core vector fields}. The action $C\times_M D\to D$ of the corresponding subgroup bundle $C\cong \exp(\g^{-1,-1})\subset G$ on $D$ is known as the \emph{core action} \cite{mac:warp}. It is a vector bundle action, given  in terms of a linearization by 
\[ c_1\cdot (a,b,c)=(a,b,c+c_1).\] 
The other non-zero components of $\g$ are 
\[ \g^{-1,0}=\wh{A},\ \ \ \g^{0,-1}=\wh{B};\] 
these are the so-called \emph{fat bundles} of Gracia-Saz and Mehta \cite{gra:vba}.
The sections of $\wh{A}$ are realized as vector fields on $D$ that are 
invariant in the vertical direction and constant  in the horizontal direction; 
the map $\g^{-1,0}\to D_\lin^{-1,0}$ becomes a bundle map $\wh{A}\to A$, which is realized on the level of sections by restrictions of vector fields to $A\subset M$. The kernel of this bundle map is 
\[  (\g^+)^{-1,0}=\P(D)^{0,1}\otimes \g^{-1,-1}=B^*\otimes C,\]
realized on the level of sections by products of core vector fields with linear functions on $B$ (pulled back by the horizontal projection $D\to B$). We obtain the short exact sequence (see \cite[Section 4]{gra:vb})
\begin{equation}\label{eq:ahat} 0_M\to B^*\otimes C\stackrel{\iz_{\wh{A}}}{\lra}\wh{A}\lra A\to 0_M.\end{equation}
A similar discussion applies to $\wh{B}$, leading to an exact sequence 
\begin{equation}\label{eq:bhat} 0_M\to A^*\otimes C  \stackrel{\iz_{\wh{B}}}{\lra}      \wh{B}\lra B\to 0_M.\end{equation}
For degree reasons, both $\g^{-1,0},\g^{0,-1}$ are abelian Lie subalgebra bundles of $\g$. The Lie bracket between these two summands
defines a bilinear pairing 
\[ \l\cdot,\cdot\r\colon \wh{A}\times_M\wh{B}\to C,\]
introduced in the work of Flari-Mackenzie \cite{mac:warp} under the name of \emph{warps}  (our notation follows \cite{me:wei}). 
Hence,  
\[ \g= \wh{A}\oplus \wh{B}\oplus C\]
is a bundle of Heisenberg Lie algebras for this bilinear form. The group bundle $G\to M$ is equal to $\g\to M$ as a fiber bundle, 
with the group structure given by 
\[ (\wh{a}_1,\wh{b}_1,c_1)\cdot (\wh{a}_2,\wh{b}_2,c_2)= \big(\wh{a}_1+\wh{a}_2,\wh{b}_1+\wh{b}_1,c_1+c_2+
\l \wh{a}_1,\wh{b}_2\r_C-\l\wh{a}_2,\wh{b}_1\r_C\big)
\]
for $\wh{a}_i\in \wh{A},\ \wh{b}_i\in \wh{B},\ c_i\in C$ (all with the same base point). 
For an explicit description of the action $ G\times_M D\to D$,  choose a linearization $D_\lin\cong D$, thereby identifying 
\[ \wh{A}\cong A\oplus (B^*\otimes C),\ \ \wh{B}\cong B\oplus (A^*\otimes C).\]
Then the fiberwise group actions of the three summands read as: 
\begin{align*}
 (a_1,\omega)\cdot (a,b,c)&=(a+a_1,b,c+\omega(b)),\\ 
  (b_1,\nu)\cdot (a,b,c) &=(a,b+b_1,c+\nu(a)),  \\
  c_1\cdot (a,b,c)&=(a,b,c+c_1).
\end{align*}
Here $a,a_1\in A,\ b,b_1\in B,\ c,c_1\in C$ and $\omega\in B^*\otimes C=\Hom(B,C),\ \nu\in A^*\otimes C=\Hom(A,C)$, all over the same base point in $M$. 

\begin{remark}
Note that the $(\R^2,\cdot)$-action on $G$ makes the Lie group bundle $G\to M$ into a double vector bundle, with 
side bundles $\wh{A},\wh{B}$ and core $C$. The original double vector bundle $D$ is a quotient 
$D=G/G^+$. 
\end{remark}


\subsection{Quotients}
%
By the general theory (see \eqref{eq:condition1}), quotients of a double vector bundle $D\to M$ are classified by graded Lie subalgebra bundles $\h\subset \g$ satisfying
$\P(D)^+\h\cap \g=\h\cap \P(D)^+\g$. This boils down to the following: 
\begin{theorem}\label{th:quot2}
Quotients of a double vector bundle $D\to M$ are classified by bigraded Lie subalgebra bundles
\[ \h=\wh{A}'\oplus \wh{B}'\oplus C'\]
of $ \g=\wh{A}\oplus \wh{B}\oplus C$, 
such that $\iz_{\wh{A}}(B^*\otimes C')=\wh{A}'\cap \iz_{\wh{A}}(B^*\otimes C)$, and similarly 
$\iz_{\wh{B}}(A^*\otimes C')=\wh{B}'\cap \iz_{\wh{A}}(A^*\otimes C)$.
In this case, 
\[ D'=H\cdot M,\ \ D''=D/H\]
where $H=\exp(\h)$ is the subgroup of $G$ exponentiating $\h$. 
\end{theorem}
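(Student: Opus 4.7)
The plan is to specialize Theorem \ref{th:quotients} to the double vector bundle $D$, using the reformulation \eqref{eq:condition1} of the quotient condition in terms of the strictly negative part $\h = \L \cap \g$. That is, quotients of $D$ correspond bijectively to bigraded Lie subalgebra bundles $\h \subset \g$ satisfying
\[ \h \cap \P^+(D)\g = \P^+(D)\h \cap \g. \]
Since $\g = \wh{A} \oplus \wh{B} \oplus C$ is concentrated in bidegrees $(-1,0), (0,-1), (-1,-1)$, every bigraded subbundle of $\g$ automatically decomposes as $\h = \wh{A}' \oplus \wh{B}' \oplus C'$ with $\wh{A}' \subset \wh{A}$, $\wh{B}' \subset \wh{B}$, $C' \subset C$. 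For degree reasons, the Lie subalgebra condition reduces to the requirement that the warp pairing sends $\wh{A}' \times_M \wh{B}'$ into $C'$.

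The main step is a bidegree-by-bidegree analysis of $\P^+(D)\g$ in the three strictly negative bidegrees. A product $f \cdot x$ with $f \in \P^+(D)^{(k_1, k_2)}$, $(k_1, k_2) \in \N^2 \setminus \{(0,0)\}$, and $x \in \g^{(i_1, i_2)}$, $(i_1, i_2) \in \{(-1,0), (0,-1), (-1,-1)\}$, has bidegree $(k_1 + i_1, k_2 + i_2)$. A short case analysis shows that no admissible combination lands in bidegree $(-1,-1)$, so $(\P^+(D)\g)^{(-1,-1)} = 0$; in bidegree $(-1,0)$ the only contribution is from $\P^+(D)^{(0,1)} \cdot \g^{(-1,-1)} = B^* \cdot C$, giving $\iz_{\wh{A}}(B^* \otimes C)$; and in bidegree $(0,-1)$ one obtains symmetrically $\iz_{\wh{B}}(A^* \otimes C)$. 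The same reasoning with $\h$ in place of $\g$ yields $(\P^+(D)\h)^{(-1,-1)} = 0$, $(\P^+(D)\h)^{(-1,0)} = \iz_{\wh{A}}(B^* \otimes C')$, and $(\P^+(D)\h)^{(0,-1)} = \iz_{\wh{B}}(A^* \otimes C')$.

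With these identifications the condition \eqref{eq:condition1} is automatic in bidegree $(-1,-1)$; in bidegree $(-1,0)$, using that $\iz_{\wh{A}}(B^* \otimes C')$ is already contained in $\wh{A}$, it collapses to
\[ \wh{A}' \cap \iz_{\wh{A}}(B^* \otimes C) = \iz_{\wh{A}}(B^* \otimes C'), \]
which is precisely the first condition in the theorem; bidegree $(0,-1)$ yields the symmetric condition for $\wh{B}'$. The identifications $D' = H \cdot M$ and $D'' = D/H$ are then inherited directly from Theorem \ref{th:quotients}. The only delicate point is the bidegree bookkeeping — in particular, verifying that no other combination of positive-degree polynomials and elements of $\g$ can contribute to $\P^+(D)\g$ in the relevant bidegrees — but this is straightforward given the concentration of $\g$ in the three bidegrees above, and no further conceptual input is needed beyond the warp pairing and the exact sequences \eqref{eq:ahat}, \eqref{eq:bhat}.
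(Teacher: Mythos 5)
Your proposal is correct and follows essentially the same route as the paper: specialize Theorem \ref{th:quotients} via the reformulation \eqref{eq:condition1}, observe that the condition is vacuous in bidegree $(-1,-1)$ and that in bidegrees $(-1,0)$ and $(0,-1)$ the only contributions to $\P^+(D)\g$ and $\P^+(D)\h$ come from $\P(D)^{0,1}\cdot C$, $\P(D)^{1,0}\cdot C$ and $\P(D)^{0,1}\cdot C'$, $\P(D)^{1,0}\cdot C'$ respectively, yielding exactly the stated conditions on $\wh{A}'$ and $\wh{B}'$. Your bidegree bookkeeping just makes explicit what the paper asserts in one line, and the identifications $D'=H\cdot M$, $D''=D/H$ are inherited from Theorem \ref{th:quotients} in both arguments.
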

\begin{proof}
The condition $\P(D)^+\h\cap \g=\h\cap \P(D)^+\g$ is non-trivial only in bidegrees $(-1,0)$ and $(0,-1)$. 
In bidegree $(-1,0)$, it reduces to 
\[ \P(D)^{0,1}  \h^{-1,-1}=\h^{-1,0}\cap \P(D)^{0,1}\g^{-1,-1}.\]
But $\P(D)^{0,1}=B^*,\ \h^{-1,-1}=C',\ \g^{-1,-1}=C,\ \h^{-1,0}=\wh{A}'$, so the condition says 
\[ \iz_{\wh{A}}(B^*\otimes C')=\wh{A}'\cap \iz_{\wh{A}}(B^*\otimes C),\]
which is just the same as $  \iz_{\wh{A}}(B^*\otimes C')\subset \wh{A}'$. Similarly, for the condition in bidegree 
$(0,-1)$. 
\end{proof}
\begin{remark}
Note that a bigraded subbundle $\h\subset \g$ satisfying these conditions 
is automatically closed under brackets. 
\end{remark}
The condition can be interpreted in several other ways. 
For example, given any vector subbundle $C'\subseteq C$ 
we may consider the Lie subalgebra bundle $\g_{C'}\subset \g$ whose sections are the vector fields tangent to $C'$. This subbundle is
\[ \g_{C'}=\iz_{\wh{A}}(B^*\otimes C')\oplus 
\iz_{\wh{B}}(A^*\otimes C')\oplus C'.\]
Comparing with $\h=\wh{A}'\oplus \wh{B}'\oplus C'$, we see: 
\begin{proposition}
A bigraded subbundle  $\h\subset \g$ satisfies the conditions from  Theorem \ref{th:quot2}	
 if and only if 
\[ \h\cap \g_C=\  \g_{C'}\] 
for some $C'\subset C$. 
\end{proposition}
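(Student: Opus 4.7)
The plan is to compare both sides of the identity $\h\cap \g_C = \g_{C'}$ summand by summand in the bigrading, using the decompositions $\g = \wh{A}\oplus\wh{B}\oplus C$ and $\g_C = \iz_{\wh{A}}(B^*\otimes C)\oplus\iz_{\wh{B}}(A^*\otimes C)\oplus C$, together with the analogous expression for $\g_{C'}$. Since $\h\cap\g_C$ and $\g_{C'}$ are both bigraded subbundles with nonzero summands only in bidegrees $(-1,0)$, $(0,-1)$, $(-1,-1)$, the equality can be checked one bidegree at a time.

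First I would pass to the $(-1,-1)$ component. The left-hand side is $\h^{-1,-1}\cap C$, which equals $\h^{-1,-1}$ since $\g^{-1,-1}=C$; the right-hand side is literally $C'$. Hence for the identity to hold at all, one is forced to have $C' = \h^{-1,-1}$. In the ``only if'' direction this fixes the required choice of $C'$, while in the ``if'' direction it tells us which $C'$ must appear.

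Next I would examine the $(-1,0)$ component. With $C' = \h^{-1,-1}$, the left-hand side becomes $\wh{A}'\cap \iz_{\wh{A}}(B^*\otimes C)$, while the right-hand side is $\iz_{\wh{A}}(B^*\otimes C')$. The assertion that these coincide is exactly the first condition of Theorem \ref{th:quot2}. The $(0,-1)$ component is handled by the symmetric argument and yields the second condition.

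Taking the three bidegrees together, the equality $\h\cap\g_C = \g_{C'}$ holds (necessarily for the unique choice $C' = \h^{-1,-1}$) if and only if both conditions of Theorem \ref{th:quot2} are satisfied. The argument is essentially bookkeeping; the only subtlety is to recognize that $C'$ is forced by $\h$ before comparing the other two bidegrees, so there is no genuine obstacle to overcome.
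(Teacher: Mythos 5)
Your proof is correct and follows the same route as the paper, which simply says ``Comparing with $\h=\wh{A}'\oplus\wh{B}'\oplus C'$, we see\dots'' — i.e., a bidegree-by-bidegree comparison of $\h\cap\g_C$ with $\g_{C'}$, with the $(-1,-1)$ component forcing $C'=\h^{-1,-1}$ and the $(\pm 1,0)$-type components reproducing exactly the two conditions of Theorem~\ref{th:quot2}. You have merely written out the bookkeeping the paper leaves implicit.
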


\subsection{Quotient in stages}
According to Theorem \ref{th:stages},  the quotient $D''$ of $D$ may be obtained as an iterated quotient by vector bundle actions. Write  $\h=\wh{A}'\oplus \wh{B}'\oplus C',\ H=\exp\h$ as before. 

Taking a quotient by the action of 
$\exp \h^{-1,-1}\cong \h^{-1,-1}=C'$, one obtains a  double vector bundle  $D/C'$ with side bundles $A,B$ and core $C''=C/C'$. The action of $H$ descends to an action of $H/\exp\h^{-1,-1} \cong \wh{A}'\oplus \wh{B}'$ (a vector bundle seen as an abelian group bundle). Since $\iz_{\wh{A}}(B^*\otimes C')\subset \wh{A}'$ and 
$\iz_{\wh{B}}(A^*\otimes C')\subset \wh{B}'$ act trivially on the quotient, this is really an action of 
$A'\oplus B'$. 
In conclusion, the quotient $D''=D/H$ may be written as 
\[ D''=D/C'/(A'\oplus B').\]
We stress that the action of $A'\oplus B'$ on the quotient $D/C'$  is not canonically determined by $D'$ alone -- this action is encoded by  the Lie algebra $\h\subset \g$.

\subsection{Quotients by wide subbundles}
One simple instance of quotients of double vector bundles  is 
the following. Suppose $D_1\subset D$ is a double subvector bundle with side bundles $A_1,B_1$ and core $C_1$, 
where $A_1=A$ (so that $D_1$ is a `vertically wide' subbundle).
\[
\xymatrix{ {D_1} \ar[r] \ar[d] & B_1\ar[d]\\
	A \ar[r] & M}\]
Then we may simply take the  quotient of $D$ by $D_1$ as vector bundles over $A$. The  result is a double vector bundle 
\[D/\!_v\,D_1\] 
where the subscript signifies the `vertical quotient'. The vertical quotient has side bundles $A,\ B/B_1$ and core $C/C_1$. 

\begin{remark}
The kernel of the quotient map $D\to D''=D/\!_v\,D_1$ is $D'=B_1\times_M C_1$, seen as a double vector bundle whose horizontal side bundle is the zero bundle.  Again, knowledge of the kernel does not suffice to construct the quotient.  The subalgebra $\h$ defining the quotient map is given by 
\[ \h^{-1,-1}=C_1,\ \  \h^{-1,0}=\iz_{\wh{A}}(B^*\otimes C_1),\ \ \h^{0,-1}=\wh{B_1}\] 
where $\wh{B_1}$ denotes the fat bundle for the double vector bundle $D_1$.   
\end{remark}

In a similar way, one may define horizontal quotients $D/\!_h D_2$ by horizontally wide double subvector bundles $D_2\subset D$. 

Suppose now that $D_1\subset D$ is vertically wide, and $D_2\subset D$ is horizontally wide. For $i=1,2$, we denote the side bundles by $A_i,B_i$ and the cores by $C_i$; thus $A_1=A,\ B_2=B$.
\[
\xymatrix{ {D_1} \ar[r] \ar[d] & B_1\ar[d]\\
	A \ar[r] & M}\ \ \ \ \ \ \xymatrix{ {D_2} \ar[r] \ar[d] & B \ar[d]\\
	A_2\ar[r] & M}
\]
Suppose furthermore that $D_1\cap D_2$ is a submanifold of $D$, and hence is a double subvector bundle. Then the image of $D_2$  under the vertical quotient  is a double subvector bundle 
\begin{equation}\label{eq:iterate} D_2/\!_v\,(D_1\cap D_2)\subset D/\!_v\,D_1,\end{equation}
with side bundles $A_2,\ B/B_1$ and core $C_2/C_1\cap C_2$. Hence, the left hand side of 
 \eqref{eq:iterate} is a horizontally wide subbundle, and we may take a horizontal quotient, viewing both sides as vector bundles over $B/B_1$. The result is a double vector bundle 
 \begin{equation}\label{eq:dpprime}
 D''=( D/\!_v\,D_1)/\!_h\,\big(D_2/\!_v\,(D_1\cap D_2)\big)
 \end{equation}
 with side bundles and core given by
\[ A''=A/A_2,\ \ B''=B/B_1,\ \ C''=(C/C_1)/C_1\cap C_2=C/(C_1+C_2).\] 
One may also perform the quotient procedure in the opposite order, starting with a horizontal quotient by $D_2$, followed by a vertical quotient by $D_1/\!_h (D_1\cap D_2)$. Part (b) of the following result
 shows that these two iterated quotients are the same. Part (a) 
 was observed by Jeff Pike. 
\begin{theorem}\label{prop:pike}
Let $D\to M$ be a double vector bundle, with side bundles $A,B$ and core $C$. 
\begin{enumerate}
\item Every quotient $D''$ of $D$ is of the form \eqref{eq:dpprime}, 
where $D_1,D_2\subset D$ is a pair of double vector bundles, with $D_1$ wide over $A$ and $D_2$ wide over $B$, and $D_1\cap D_2$ a submanifold. 
\item 
There is a canonical isomorphism of double vector bundles
\[ ( D/\!_v\,D_1)/\!_h\,\big(D_2/\!_v\,(D_1\cap D_2)\big)  \cong 
( D/\!_h\,D_2)/\!_v\,\big(D_1/\!_h\,(D_1\cap D_2)\big) .\]
\item 
For a given quotient as in (a), the 
pair $D_1,D_2$ is uniquely determined if we require that $C_1=C_2$. 
\end{enumerate}
\end{theorem}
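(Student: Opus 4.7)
The plan is to read off the pair $(D_1,D_2)$ directly from the bigraded Lie subalgebra bundle $\h=\wh{A}'\oplus \wh{B}'\oplus C'\subseteq \g$ that classifies $D''$ via Theorem~\ref{th:quot2}, by making the symmetric choice $C_1=C_2=C'$. Define bigraded subbundles of $\g$
\begin{align*}
\h_1 &= \iz_{\wh{A}}(B^*\otimes C')\oplus \wh{B}'\oplus C',\\
\h_2 &= \wh{A}'\oplus \iz_{\wh{B}}(A^*\otimes C')\oplus C',
\end{align*}
each of which is automatically a bigraded Lie subalgebra bundle satisfying the hypotheses of Theorem~\ref{th:quot2}. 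By the formula recalled in the remark above for wide-subbundle quotients, $\h_1$ determines a vertical quotient $D\to D/\!_v D_1$; take $D_1\subseteq D$ to be the preimage of $A\subseteq D/\!_v D_1$, a vertically wide sub-DVB with $B_1=B',\ C_1=C'$, and symmetrically for $D_2$.

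For part (a), the heart of the argument is to show $D_1\cap D_2$ is a submanifold. Using the Heisenberg-type action of $G$ on $D$ described in Section 4.2, and choosing splittings of the fat-bundle extensions $\sigma\colon A'\to\wh{A}'$ and $\sigma'\colon B'\to\wh{B}'$, I parameterize $D_1=H_1\cdot A$ as $\{(a,b,\nu_b(a)+c_0):a\in A,\,b\in B',\,c_0\in C'\}$ where $\sigma'(b)=(b,\nu_b)$, and symmetrically $D_2$. The intersection condition reduces to $\nu_b(a)-\omega_a(b)\in C'$ for $a\in A',\,b\in B'$; by the explicit warp-pairing formula this difference is precisely the bracket $[\sigma(a),\sigma'(b)]\in [\wh{A}',\wh{B}']\subseteq C'$, which holds because $\h$ is a Lie subalgebra. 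Hence $D_1\cap D_2$ is a smooth sub-DVB with sides $A',B'$ and core $C'$. A bidegree-by-bidegree check now shows $\h_1+\h_2=\h$, the only non-trivial cases being bidegrees $(-1,0)$ and $(0,-1)$ where equality uses the containments $\iz_{\wh{A}}(B^*\otimes C')\subseteq \wh{A}'$ and $\iz_{\wh{B}}(A^*\otimes C')\subseteq \wh{B}'$ from Theorem~\ref{th:quot2}. Since $\h=\h_1+\h_2$ is itself a Lie subalgebra, $\exp\h$ coincides with the group bundle generated by $H_1,H_2$, so the iterated quotient realizes $D/\exp\h\cong D''$; reversing the order yields the same $D/\exp\h$, giving (b) with canonical isomorphism.

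For part (c), suppose $(\widetilde{D}_1,\widetilde{D}_2)$ is another pair with $\widetilde{C}_1=\widetilde{C}_2$ producing the same quotient. Then $\widetilde{\h}_1+\widetilde{\h}_2=\h$, and reading off bidegree $(-1,-1)$ together with $\widetilde{C}_1=\widetilde{C}_2$ forces $\widetilde{C}_1=\widetilde{C}_2=C'$. The fat bundle $\wh{\widetilde{A}_2}\subseteq\wh{A}$ then contains $B^*\otimes C'$ as its extension kernel, and the bidegree-$(-1,0)$ equation $\iz_{\wh{A}}(B^*\otimes C')+\wh{\widetilde{A}_2}=\wh{A}'$ forces $\wh{\widetilde{A}_2}=\wh{A}'$; symmetrically $\wh{\widetilde{B}_1}=\wh{B}'$. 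Since $\widetilde{D}_i$ is recovered from these data by the same preimage-of-zero-section construction used in part~(a), this gives $\widetilde{D}_i=D_i$. The main anticipated obstacle is the submanifold verification in (a); this is the only place where the Lie-subalgebra condition on $\h$, beyond its bigraded-subbundle structure, is used in an essential way.
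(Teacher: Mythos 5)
Your proposal follows essentially the same route as the paper: extract $\h=\wh{A}'\oplus\wh{B}'\oplus C'$ from Theorem \ref{th:quot2}, split it as $\h=\h_1+\h_2$ using exactly the two normal subalgebra bundles $\h_1=\iz_{\wh{A}}(B^*\otimes C')\oplus\wh{B}'\oplus C'$ and $\h_2=\wh{A}'\oplus\iz_{\wh{B}}(A^*\otimes C')\oplus C'$ that the paper uses, realize $D_1,D_2$ as the corresponding flow-outs with $C_1=C_2=C'$, and identify the iterated quotient with $D/\exp\h$. Your explicit warp-pairing verification that $D_1\cap D_2$ is a submanifold (the identity $\nu_b(a)-\omega_a(b)=\l\sigma(a),\sigma'(b)\r_C\in C'$) and your bidegree-by-bidegree uniqueness argument for (c) are correct refinements of steps the paper leaves implicit. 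The one point your writeup does not fully cover is part (b) for a general admissible pair with $C_1\neq C_2$: the paper handles this by replacing $D_1,D_2$ with their flow-outs under the core action of $C_1+C_2$, which changes neither the side bundles nor the iterated quotients and reduces to the case $C_1=C_2$; your own warp-pairing computation already shows that the submanifold hypothesis on $D_1\cap D_2$ forces the induced pairing into $C/(C_1+C_2)$ to vanish, so this reduction is available to you, but it should be stated.
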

\begin{proof}
Given a quotient $\pi\colon D\to D''$ with 	kernel $D'$, let $\h=\wh{A}'\oplus \wh{B}'\oplus C' \subset \g$ be as in Theorem \ref{th:quot2}. 
Let 
\[ D_1=\wh{B}'\cdot A,\ \ D_2=\wh{A}'\cdot B\]
be the flow-outs under the action of the corresponding subgroups of $H$. 
Choosing   a linearization $D\cong A\times_M B \times_MC $ and the corresponding description of the action 
of $\wh{A}$ on $D$, we see that 
\[ D_1\cong A\times_M B'\times_M C',\ \ \ D_2\cong A'\times_M B\times_M C'.\] 
This shows that $D_1$ is a double subvector bundle with  $A_1=A,\,B_1=B',\,C_1=C'$, and $D_2$ is a double subvector bundle with $A_2=A',\,B_2=B,\,C_2=C'$. 
Write $\h=\h_1+\h_2$, where  
\[ \h_1=\iz_{\wh{A}}(B^*\otimes C')\oplus \wh{B}'\oplus C',\ \ 
\h_2=\wh{A}'\oplus \iz_{\wh{B}}(A^*\otimes C') \oplus C',\]
are normal subalgebra bundle of $\h$. Let $H_1=\exp \h_1,\ \ H_2=\exp\h_2$ be the corresponding group bundles. 
Taking a vertical quotient by $D_1$ amounts to taking the quotient by the action of $H_1=\exp\h_1 \subset H$; 
the subsequent horizontal quotient by $D_2/\!_v(D_1\cap D_2)$ coincides with the quotient by the action of $H_2/H_1\cap H_2\subset H/H_1$. Clearly, this iterated quotient is the same as the quotient by $H$. This shows 
\[ D''=( D/\!_v\,D_1)/\!_h\,\big(D_2/\!_v\,(D_1\cap D_2)\big),\]
completing the proof of (a), with the canonical choice of $D_1,D_2$ suggested in (c). Part (b) also follows for this particular choice of $D_1,D_2$.  Given a more general choice of $D_1,D_2$, with 
$D_1\subset D$ vertically wide and $D_2\subset D$ horizontally wide,   put $C'=C_1+C_2$. Replacing $D_1,D_2$ with 
their flow-outs under the core action $\ti{D}_1=C'\cdot D_1,\ \ti{D}_2=C'\cdot D_2$ does not change the side bundles, but replaces the both cores with $C'$. The iterated quotients by the actions of these bundles are not changed either. 
This proves (b), as well as (c). 
\end{proof}

\section{Application: double normal bundles}\label{sec:dn}\label{sec:dnb}

\subsection{Double tangent bundle}
An important example of a double vector bundle is the \emph{double tangent bundle} 
\[
\xymatrix{ {TTM} \ar[r] \ar[d] & TM \ar[d]\\
	TM \ar[r] & M}
\]
As stated in Example \ref{ex:rthtangent},  this
may be defined as
\[ TTM=\Hom_\alg(C^\infty_\pol(M),\AA_1\otimes \AA_1),\]
where $\AA_{1}$ is the commutative unital algebra with generator $\epsilon$ and relation $\epsilon^2=0$. 
Both side bundles $A,B$ are equal to $TM$, and the core $C$ is yet another copy of $TM$. In terms of the identification  $TM=\Hom_\alg(C^\infty(M),\AA_1)$, the inclusion maps for these three subbundles correspond to the algebra morphisms $\AA_1\to \AA_1\otimes \AA_1$ taking the 
generator $\epsilon$ to $\epsilon\otimes 1$, $1\otimes \epsilon$, and $\epsilon\otimes \epsilon$, respectively.

The bigraded Lie algebra bundle $\V(TTM)^{<0}$ has components in bidegrees $(-1,0),(0,-1),(-1,-1)$. It  
admits a very concrete description in terms of lifts of vector fields. Recall that every  vector field $X\in \mf{X}(M)$ has two kinds of lifts to the tangent bundle, the \emph{vertical lift} and the \emph{tangent lift} 
\[ X_v\in\mf{X}_\pol(TM)^{-1},\ \ 
X_t\in \mf{X}_\pol(TM)^{0}.
\] 
The vertical lift $X_v$ is defined by the canonical isomorphism $\mf{X}_\pol(V)^{-1}\cong \Gamma(V)$ for vector bundles $V\to M$ (here $V=TM$),  extending sections of $V\subset TV|_M=V\oplus TM$ to fiberwise constant vertical vector fields. 
The tangent lift $X_t$ is the vector field 
whose local flow is obtained by applying the tangent lift to the local flow of $X$.
This vector field is tangent to the zero section, with 
$X_t|_M=X$. There are also two lift operations for functions $f\in C^\infty(M)$,
\[  f_v\in C^\infty_\pol(TM)^{0},\ \ f_t\in C^\infty_\pol(TM)^1.\]
The vertical lift $f_v$ is simply the  pullback, and the tangent lift $f_t$ is the exterior differential regarded as a function on $TM$. These lifts are related by various identities, most easily remembered by examining the degree of homogeneity; for example 
\[ (fX)_t=f_t X_v+f_vX_t,\ [X,Y]_v=[X_t,Y_v]=[X_v,Y_t],\ (Xf)_t=X_t f_t.\]

For a double tangent bundle $TTM$, we may combine these two operations and obtain four kinds of lifts of functions \[  f_{tt}\in C^\infty_\pol(TTM)^{0,0},\ \  f_{tv}\in C^\infty_\pol(TTM)^{0,1},\ \  f_{vt}\in C^\infty_\pol(TTM)^{1,0},\ \  f_{vv}\in C^\infty_\pol(TTM)^{1,1},\] and also 
four kinds of lifts of vector fields
\[  X_{tt}\in\mf{X}_\pol(TTM)^{0,0},\  \  X_{tv}\in\mf{X}_\pol(TTM)^{0,-1},\  \  X_{vt}\in\mf{X}_\pol(TTM)^{-1,0},\ \  X_{vv}\in\mf{X}_\pol(TTM)^{-1,-1}.\]
Let us  write $A,B\cong TM$ for the horizontal and vertical side bundles of $TTM$, and $C\cong TM$ for the core. The lifts $X_{vv}$ realize the isomorphism 
\[ \mf{X}_\pol(TTM)^{-1,-1}\cong \Gamma(C)\]
(`core sections'). Furthermore, we obtain splittings of the  exact sequences \eqref{eq:ahat} and \eqref{eq:bhat}  for the fat bundles, on the level of sections: In fact, 
\[ \mf{X}_\pol(TTM)^{0,-1}\cong \Gamma(B\oplus (A^*\otimes C)),\]
where the first summand $\Gamma(B)$ corresponds to  vector fields of the form $X_{tv}$, and the second summand 
$\Gamma(A^*\otimes C)$ is realized as linear combinations of products $f_{tv}X_{vv}$. 
Similarly, 
\[ \mf{X}_\pol(TTM)^{-1,0}\cong \Gamma(A\oplus (B^*\otimes C)).\]

\subsection{Double normal bundles}
Recall that two embedded submanifolds $N_1,N_2$ of a manifold $M$ \emph{intersect cleanly} if the intersection ${N}=N_1\cap N_2$ 
is a submanifold, with $T{N}=TN_1\cap TN_2$. In this situation, there is a double vector bundle
called the \emph{double normal bundle} 
\cite{me:wei}, 
\begin{equation}\label{eq:dnb}
\xymatrix{ {\nu(M,N_1,N_2)} \ar[r] \ar[d] & \nu(N_2,{N}) \ar[d]\\
	\nu(N_1,{N}) \ar[r] & {N}}
\end{equation}
The side bundles of the double normal bundle are the usual normal bundles of $N$ inside $N_1.N_2$, and the core is
given by   $TM|_N/(TN_1|_N+TN_2|_N)$. Note that the rank of the core is the \emph{excess} of the clean intersection, 
i.e., the core is the zero bundle if and only if the intersection is transverse. Interchanging the roles of $N_1,N_2$ gives a 
symmetry property 
\begin{equation}\label{eq:symmetry} \nu(M,N_1,N_2)\on \cong \on{flip}(\nu(M,N_2,N_1)),\end{equation} 
where `$\on{flip}$' interchanges the horizontal and vertical vector bundle structures. 

The double normal bundle may be described 
most simply as  an iterated normal bundle
$\nu(\nu(M,N_1),\nu(N_2,{N}))$, but this approach has some drawbacks -- for example, 
the property  \eqref{eq:symmetry} is not evident. A second approach, 
from which the symmetry \eqref{eq:symmetry}  is obvious, uses the bifiltration of 
$C^\infty(M)$  by the ideals $C^\infty(M)_{(i_1,i_2)}$ of functions vanishing to order $i_1$ on $N_1$ and to order $i_2$ on $N_2$. Letting $\on{gr}(C^\infty(M))$ denotes the associated bi-graded algebra, one has that 
\[ \nu(M,N_1,N_2)=\Hom_\alg(\on{gr}(C^\infty(M)),\R).\]
We shall now give a third description as a subquotient of the double tangent bundle $TTM$, analogous to the description $\nu(M,N)=TM|_N/TN$ 
of the usual normal bundle. Let 
\[ \varphi\colon TTM\to TM\times_M TM\]
be the map given by the horizontal and vertical base projection, and put
\[ D=\varphi^{-1} (TN_1|_{N}\times_{N} TN_2|_{N})\subset TTM.\] 
This is double subvector bundle 
\[
\xymatrix{ D \ar[r] \ar[d] & TN_2|_{N}\ar[d]\\
	TN_1|_{N}\ar[r] & {N}}
\]
with core $TM|_{N}$. The sections of the 
Lie algebra bundle $\g=\V(D)^{<0}$ are the restrictions of polynomial vector fields that are tangent to $D$:
\begin{align*}
\Gamma(\g^{-1,-1})&=\{X_{vv}|_D\colon X\mbox{ arbitrary}\},\\
\Gamma(\g^{0,-1})&=\{X_{tv}|_D\colon X|_{N}\in \Gamma(TN_1|_{N})\}+C^\infty_\pol(D)^{1,0}\cdot \Gamma(\g^{-1,-1}),\\
\Gamma(\g^{-1,0})&=\{X_{vt}|_D\colon X|_{N}\in \Gamma(TN_2|_{N})\}
+C^\infty_\pol(D)^{0,1}\cdot \Gamma(\g^{-1,-1}).
\end{align*} 
(Here $C^\infty_\pol(D)^{0,1},\ C^\infty_\pol(D)^{1,0}$ 
are realized as restrictions of functions of the form $f_{vt}$, respectively $f_{tv}$.) We hence see that, similarly to $TTM$, the exact sequences for the fat bundles of the double subvector bundle $D$ split on the level of sections. 
The subbundle $\h\subset \g$ defining the quotient map is described on the level of sections as
\begin{align*}
\Gamma(\h^{-1,-1})&=\{X_{vv}|_D\colon X|_{N}\in \Gamma(TN_1|_{N}+TN_2|_{N})\},\\
\Gamma(\h^{0,-1})&=\{X_{vt}|_D\colon X|_{N}\in \Gamma(T{N})\}+C^\infty_\pol(D)^{1,0}\cdot \Gamma(\h^{-1,-1}),\\
\Gamma(\h^{-1,0})&=\{X_{tv}|_D\colon X|_{N}\in \Gamma(T{N})\}
+C^\infty_\pol(D)^{0,1}\cdot \Gamma(\h^{-1,-1}).
\end{align*} 
Thus, with the group bundle $H=\exp\h$, we have $\nu(M,N_1,N_2)=D/H$. 

Alternatively, we may use the method from Theorem \ref{prop:pike} to define the quotient. For $i=1,2$, let 
$\varphi_i\colon TTN_i\to TN_i\times_{N_i} TN_i$  be projection to the two side bundles. The pre-images 
\[ D_1=\varphi_1^{-1}(TN_1|_{N}\times_{N} T{N})\subset TTN_1,\ \ \ D_2=\varphi_2^{-1}(T{N}\times_{N} TN_2|_{N})\subset TTN_2\] are double subvector bundles 
\[
\xymatrix{ D_1 \ar[r] \ar[d] & T{N}\ar[d]\\
	TN_1|_{N}\ar[r] & {N}}\ \ \ \ \ \ 
\xymatrix{D_2 \ar[r] \ar[d] & TN_2|_{N}\ar[d]\\
	T{N}\ar[r] & {N}}\]
with cores $C_1=TN_1|_{N},\ C_2=TN_2|_{N}\subset C$, respectively. We may describe $\nu(M,N_1,N_2)$ as the vertical quotient by 
$D_1$ followed by the horizontal quotient by $D_2/\!_v\,(D_1\cap D_2)$; according to Proposition \ref{prop:pike} this is canonically isomorphic to the horizontal quotient by $D_2$ followed by the vertical quotient by $ D_1/\!_h\,(D_1\cap D_2)$.

\section{Example: Weighted normal bundles for order $2$ weightings} \label{sec:r2}
Let $M$ be a manifold. In \cite{loi:wei}, we developed a theory of 
\emph{weightings along submanifolds $N\subset M$}, a notion introduced in lecture notes of Melrose \cite{mel:cor}  under the name of quasi-homogeneous structure. A weighting is described in terms of a suitable filtration 
\[ C^\infty(M)=C^\infty(M)_{(0)}\supseteq C^\infty(M)_{(1)}\supseteq\cdots\] 
of the algebra of smooth functions on $M$. Roughly, the filtration amounts to an assignment of weights to coordinate functions, making sense of a weighted order of vanishing along $N$.  An upper bound for the weights is called an \emph{order of the weighting}. Weightings determine a filtration 
\[ 0_N=F_0\subset F_{-1}\subset \cdots \subset F_{-r}=\nu(M,N)\] 
of 
the normal bundle $\nu(M,N)$, and a main result in \cite{loi:wei} is the construction of a  \emph{weighted normal bundle} 
 \[\nu_\W(M,N)\to N,\] which is a graded bundle over $N$ with linear approximation \[\nu_\W(M,N)_\lin=\bigoplus_{i=1}^r 
 F_{-i}/F_{-i+1}.\] 
The weighted normal bundle admits an algebraic description as $\Hom_\alg(\on{gr}(C^\infty(M)),\R)$, where 
$\on{gr}(C^\infty(M))$ is the associated graded algebra for the given filtration. But as shown in \cite{loi:wei}, it may also be described as a subquotient of the $r$-th tangent bundle $T_rM$. 

Let us describe the quotient procedure for the special case of  weightings of order $r=2$. Here the notion of 
weighting is simpler \cite{me:eul}, since it is  fully determined by the subbundle $F_{-1}=F$.  Letting $\ti{F}\subset TM|_N$ be the pre-image of $F$ under the quotient map $TM|_N\to \nu(M,N)$, the filtration on functions is generated by letting $C^\infty(M)_{(1)}$ be all functions vanishing along $N$, and $C^\infty(M)_{(2)}\subset C^\infty(M)_{(1)}$ those functions whose differential vanishes on $\ti{F}$.

Each $f\in C^\infty(M)$ has three kinds of lifts 
\[ f^{(i)}\in C^\infty(T_2M)^i,\ i=0,1,2.\] In terms of the 
definition $T_2M=\on{Hom}_\alg(C^\infty(M),\AA_2)$, these are defined by 
\[ \mathsf{u}(f)=f^{(0)}(\mathsf{u})+f^{(1)}(\mathsf{u})\epsilon+f^{(2)}(\mathsf{u}){\epsilon^2},\]
for $\mathsf{u}\in T_2M$. Alternatively, thinking of elements of $T_2M$ as 2-jets of curves $\gamma\colon \R\to M$, 
\[ f^{(0)}(j^2(\gamma))=f(\gamma(0)),\ \ f^{(1)}(j^2(\gamma))=\f{d}{d t}|_{t=0}f(\gamma(t)),\ \ f^{(2)}(j^2(\gamma))=2\f{d^2}{d t^2}|_{t=0}f(\gamma(t)).\]
For vector fields $X\in\mf{X}(M)$ we have lifts \cite{mor:lif}
\[ X^{(-i)}\in \mf{X}_\pol(T_2M)^{-i},\ i=0,1,2,\] given in terms of the action on functions by 
\[ X^{(-i)}f^{(j)}=(Xf)^{(j-i)}\]
where the right hand side is taken to be zero if $j-i<0$. The lifts $X^{(-2)}$ define a vector bundle action of $TM$ on $T_2M$. 
Now let 
\[ Q=T_2M\big|_{\wt{F}}\subset T_2M\] 
be the pre-image of $\ti{F}$ under the map $T_2M\to TM$.  (Note that this is analogous to the definition of $D\subset TTM$ in the previous section.) Then $Q\to N$ is a graded subbundle of $T_2M\to M$, with linear approximation $Q_\lin=\ti{F}\oplus TM|_N$. 
%

Let us now consider the Lie algebra bundle $\g=\V(Q)^{<0}$.  
The pairing of homogeneous polynomials of degree $1$ on $E$ with homogeneous vector fields of degree $-1$, followed by restriction to $M\subset E$, defines a non-degenerate pairing between $\P(E)^1$ and $E_\lin^{-1}$; hence 
	$\P(E)^1\cong  (E_\lin^{-1})^*$. 
	From the exact sequence 
	\eqref{eq:exactve}, we see that  
	\[ \g^{-2}=E_\lin^{-2},\] 
	while 
	$\g^{-1}$ fits into an exact sequence 
	\begin{equation}\label{eq:exactr2} 0_M\to (E_\lin^{-1})^*\otimes \g^{-2}\to \g^{-1}\to E_\lin^{-1}\to 0_M.\end{equation}
In terms of its space of sections, 
\begin{align*}
\Gamma(\g^{-2})&=\{X^{(-2)}|_Q\colon X\mbox{ arbitrary}\}\\
\Gamma(\g^{-1})&=\{\Big(X^{(-1)}+\sum_\nu f_\nu^{(1)} Y_\nu^{(-2)}\Big)|_Q\colon X|_N\in \Gamma(\ti{F})\ 
f_\nu,Y_\nu \mbox{ arbitrary}\}.
\end{align*}
In particular, the exact sequence \eqref{eq:exactr2} splits on the level of sections. 
Let $\h\subset \g$ be the Lie subalgebra bundle given on the level of sections by 
\begin{align*}
\Gamma(\h^{-2})&=\{X^{(-2)}|_Q\colon X|_N\in \Gamma(\ti{F})\}\\
\Gamma(\h^{-1})&=\{\Big(X^{(-1)}+\sum_\nu f_\nu^{(1)} Y_\nu^{(-2)}\Big)|_Q\colon X|_N\in \Gamma(TN)\ 
f_\nu\mbox{ arbitrary},\ Y_\nu|_N\in  \Gamma(\ti{F})\}.
\end{align*}
The weighted normal bundle is the quotient $\nu_\W(M,N)=Q/H$, with kernel
$H\cdot M=\wt{F}\cdot T_2N\subset T_2M$ (using the vector bundle action of $\wt{F}\subset TM|_N$ on 
$T_2M|_N$). We hence have the exact sequence 
\[ 0_N\to \wt{F}\cdot T_2N\to T_2M\big|_{\wt{F}}\to \nu_\W(M,N)\to 0_N\]
for the weighted normal bundle, generalizing the defining sequence 
\[ 0_N\to TN\to TM|_N\to \nu(M,N)\to 0_N\] for the usual normal bundle. Observe also that 
\[ \nu_\W(M,N)_\lin=(T_2M\big|_{\wt{F}})_\lin/(\wt{F}\cdot T_2N)_\lin=\ti{F}/TN\oplus TM|_N/\ti{F}=F\oplus \nu(M,N)/F,\]
as expected.

\bibliographystyle{amsplain} 
\def\cprime{$'$} \def\polhk#1{\setbox0=\hbox{#1}{\ooalign{\hidewidth
			\lower1.5ex\hbox{`}\hidewidth\crcr\unhbox0}}} \def\cprime{$'$}
\def\cprime{$'$} \def\cprime{$'$} \def\cprime{$'$} \def\cprime{$'$}
\def\polhk#1{\setbox0=\hbox{#1}{\ooalign{\hidewidth
			\lower1.5ex\hbox{`}\hidewidth\crcr\unhbox0}}} \def\cprime{$'$}
\def\cprime{$'$} \def\cprime{$'$} \def\cprime{$'$} \def\cprime{$'$}
\providecommand{\bysame}{\leavevmode\hbox to3em{\hrulefill}\thinspace}
\providecommand{\MR}{\relax\ifhmode\unskip\space\fi MR }
\providecommand{\MRhref}[2]{%
	\href{http://www.ams.org/mathscinet-getitem?mr=#1}{#2}
}
\providecommand{\href}[2]{#2}

\end{document}